\documentclass[11pt,reqno]{amsart}

\usepackage[utf8]{inputenc}
\usepackage[T1]{fontenc}
\usepackage{amsmath,amssymb,amsthm,mathtools}
\usepackage[protrusion=true,expansion=false]{microtype}
\usepackage{enumitem}
\usepackage{mathrsfs}
\usepackage[colorlinks=true,linkcolor=blue,citecolor=blue,urlcolor=blue]{hyperref}

\usepackage[margin=1in]{geometry}

\theoremstyle{plain}
\newtheorem{theorem}{Theorem}[section]
\newtheorem{proposition}[theorem]{Proposition}
\newtheorem{corollary}[theorem]{Corollary}
\newtheorem{lemma}[theorem]{Lemma}

\theoremstyle{definition}
\newtheorem{definition}[theorem]{Definition}
\newtheorem{example}[theorem]{Example}

\theoremstyle{remark}
\newtheorem{remark}[theorem]{Remark}

\newcommand{\N}{\mathbb{N}}
\newcommand{\Nz}{\mathbb{N}_{0}}
\newcommand{\Z}{\mathbb{Z}}
\newcommand{\R}{\mathbb{R}}
\newcommand{\Q}{\mathbb{Q}}

\newcommand{\one}{\mathbf{1}}
\newcommand{\udens}{\overline{d}}            
\newcommand{\ldens}{\underline{d}}           
\newcommand{\dens}{d}                        
\DeclareMathOperator{\md}{md}                
\newcommand{\umd}{\overline{\md}}            
\newcommand{\lmd}{\underline{\md}}           
\newcommand{\vp}[1][p]{v_{#1}}   

\begin{document}

\title[Additive and multiplicative densities: prime valuations and symbolic models]{Additive and multiplicative densities, prime valuations and symbolic models}

\author{N. Mora Cuellar}
\address{Department of Mathematics, University of British Columbia}
\email{natalia.mora@math.ubc.ca}

\author{I. Rojas Aravena}
\address{Department of Mathematics, University of British Columbia}
\email{i.andres@math.ubc.ca}

\author{A. Yavicoli}
\address{Department of Mathematics, University of British Columbia}
\email{yavicoli@math.ubc.ca}

\subjclass[2020]{11B05, 11N25, 37A45, 20M10}
\keywords{additive density, multiplicative density, F{\o}lner sequence, joint range, prime valuation, symbolic dynamics}
\begin{abstract}
We study additive and multiplicative densities of subsets of $\N$ along
prescribed F{\o}lner sequences. We prove that additive upper density one
implies multiplicative density one along a suitable multiplicative
F{\o}lner sequence. We also prove independence in the following sense:
given an additive F{\o}lner sequence
$(G_n)_n$, a multiplicative F{\o}lner sequence $(F_n)_n$, and any
$(\alpha,\beta)\in[0,1]^2$, we construct a single set $A\subseteq\N$ such that
$\dens_{(G_n)_n}(A)=\alpha$ and $\md_{(F_n)_n}(A)=\beta$. Prime-valuation coordinates yield exact density formulas and random
models for one local condition and, under summability assumptions,
countably many; in the finite-coordinate case they also give exact higher-order
correlations. Finally, we realize these multiplicative correlations as
correlations in symbolic dynamical systems and obtain criteria for ergodicity
and mixing.
\end{abstract}

\maketitle

\section{Introduction}

A sequence $(E_n)_n$ of finite non-empty subsets of $\N$ is called an
\emph{additive F{\o}lner sequence} if, for every $m\in\N$,
\[
  \lim_{n\to\infty}\frac{|(E_n+m)\cap E_n|}{|E_n|}=1,
\]
and it is called a \emph{multiplicative F{\o}lner sequence} if, for every
$m\in\N$,
\[
  \lim_{n\to\infty}\frac{|mE_n\cap E_n|}{|E_n|}=1.
\]
Here $E_n+m:=\{x+m:x\in E_n\}$ and $mE_n:=\{mx:x\in E_n\}$.
Natural density measures a set $A\subseteq\N$ by additive averaging:
\[
  \dens(A)=\lim_{N\to\infty}\frac{|A\cap[1,N]|}{N},
\]
when the limit exists. More generally, an additive F{\o}lner sequence
$(G_n)_n$ in $(\N,+)$ defines the density
\[
  \dens_{(G_n)_n}(A)=\lim_{n\to\infty}\frac{|A\cap G_n|}{|G_n|}.
\]
From the multiplicative point of view, a multiplicative F{\o}lner sequence
$(F_n)_n$ in $(\N,\cdot)$ gives
\[
  \md_{(F_n)_n}(A)=\lim_{n\to\infty}\frac{|A\cap F_n|}{|F_n|}.
\]
The interaction between additive and multiplicative notions of largeness has a
long history in Ramsey theory, combinatorial number theory, and ergodic theory;
see, among others,
\cite{Bergelson,BergelsonHindman,BergelsonGlasscock,BergelsonMoreira}.
Multiplicative correlation sequences are studied in
\cite{DonosoLeMoreiraSun}, while normality and genericity along F{\o}lner
sequences in amenable semigroups are developed in
\cite{BergelsonDownarowiczMisiurewicz}.

The objective of the first part of the paper is to establish several relations
between additive and multiplicative density. Proposition~\ref{prop:additive-implies-mult} illustrates this by showing that if
\(\udens(A)=1\), then there is a multiplicative F{\o}lner sequence \((H_n)_n\)
for which \(\md_{(H_n)_n}(A)=1\). Moreira gave an argument in the informal note
\cite{Moreira2}; a published route is discussed in
\cite{BergelsonDownarowiczMisiurewicz}, via
\cite{BergelsonMoreiraAffine}. We then show that these two concepts are independent of each other: 
Theorem~\ref{thm:joint-range-general} states that for \emph{every} additive
F{\o}lner sequence $(G_n)_n$, every multiplicative
F{\o}lner sequence $(F_n)_n$, and every $(\alpha,\beta)\in[0,1]^2$, there is a
single set $A\subseteq\N$ such that
\[
  \dens_{(G_n)_n}(A)=\alpha
  \qquad\text{and}\qquad
  \md_{(F_n)_n}(A)=\beta.
\]

The remainder of the paper develops valuation
coordinates for multiplicative averaging and uses them to compute correlations.
For a prime $p$, write $v_p(n)$ for the exponent of
$p$ in $n$. Unique factorization identifies \((\N,\cdot)\) as the direct sum of copies of
\((\Nz,+)\) through \(n\mapsto(v_p(n))_p\). If
\[
  A_E:=\{n\in\N:v_p(n)\in E\},\qquad E\subseteq\Nz,
\]
then along a prime-box F{\o}lner sequence the multiplicative density of $A_E$
is the ordinary asymptotic density of $E$, whereas
\[
  \dens(A_E)=\sum_{e\in E}\frac{p-1}{p^{e+1}}.
\]
The one-prime and finite-prime identities are elementary consequences of
unique factorization; they provide the common mechanism for the infinite-prime,
probabilistic, and correlation results. Section~\ref{sec:symbolic} applies the standard symbolic correspondence construction, as in
\cite[Chapter~7]{EinsiedlerWard}, and uses the valuation identities to obtain
exact rather than merely one-sided correlation formulas. This yields concrete criteria for ergodicity, weak mixing, and mixing, as well
as higher-order correlation formulas under mixing of all orders.


The paper is organized as follows. Section~\ref{sec:prelim} fixes notation,
proves equidistribution of residue classes along additive F{\o}lner sequences,
and realizes arbitrary density values separately along prescribed additive and
multiplicative F{\o}lner sequences. Section~\ref{sec:relations} proves the
relation between both densities, followed by the general joint-range
theorem. Section~\ref{sec:valuation}
develops the valuation formulas and random models. Section~\ref{sec:symbolic}
establishes the symbolic correspondence and its ergodic consequences.

\section{Preliminaries}\label{sec:prelim}

In this section we introduce the two general notions of largeness used in this
paper. The results are usually established in a more general setting, but for
simplicity we restrict ourselves to the two semigroups $(\N,+)$ and
$(\N,\cdot)$. For background on multiplicative largeness, see
\cite{Bergelson}; for a systematic treatment of the interaction between
additive and multiplicative notions of largeness in a general semigroup
framework, see \cite{BergelsonGlasscock}.

Let $S\subset\N$ and $m\in\N$. We define
\begin{equation}\label{eq:shift-quotient}
   S-m:=\{n\in\N:m+n\in S\}\qquad\text{and}\qquad S/m:=\{n\in\N:mn\in S\}.
\end{equation}
\begin{definition}\label{def:folner}
An \emph{additive F{\o}lner sequence} $(F_n)_n$ is a sequence of finite non-empty
subsets of $\N$ such that for every $m\in\N$
\[
  \lim_{n\to\infty}\frac{|(F_n+m)\cap F_n|}{|F_n|}
  =\lim_{n\to\infty}\frac{|F_n\cap(F_n-m)|}{|F_n|}=1.
\]
Respectively, a \emph{multiplicative F{\o}lner sequence} is a sequence $(F_n)_n$
of finite non-empty subsets of $\N$ such that for every $m\in\N$
\[
  \lim_{n\to\infty}\frac{|mF_n\cap F_n|}{|F_n|}
  =\lim_{n\to\infty}\frac{|F_n\cap(F_n/m)|}{|F_n|}=1.
\]
\end{definition}

For every finite $F\subseteq\N$ and every $m\in\N$, the restricted
map
\[
  (F+m)\cap F\longrightarrow F\cap(F-m),
  \qquad x\longmapsto x-m,
\]
is a bijection, with inverse $y\mapsto y+m$. Likewise, the restricted map
\[
  mF\cap F\longrightarrow F\cap(F/m),
  \qquad x\longmapsto x/m,
\]
is a bijection, with inverse $y\mapsto my$. Notice that every element of
$mF\cap F$ is divisible by $m$, so the quotient $x/m$ belongs to $\N$.
Consequently,
\[
  |(F+m)\cap F|=|F\cap(F-m)|
  \qquad\text{and}\qquad
  |mF\cap F|=|F\cap(F/m)|.
\]
Thus the paired conditions in each definition are exactly equivalent, not only
asymptotically. We retain both formulations for notational symmetry. Since a
translate or dilate of a finite set has the same cardinality as the original
set, the overlap formulation is also equivalent to
\[
  \frac{|(F_n+m)\mathbin{\triangle}F_n|}{|F_n|}\longrightarrow0
  \quad\text{or}\quad
  \frac{|mF_n\mathbin{\triangle}F_n|}{|F_n|}\longrightarrow0,
\]
respectively.

We will repeatedly use two elementary consequences of the F{\o}lner property.

\begin{proposition}\label{prop:residue-classes}
Every additive or multiplicative F{\o}lner sequence $(F_n)_n$ satisfies
$|F_n|\to\infty$. Moreover, if $(G_n)_n$ is additive, then for every
$q\in\N$ and every residue $r\in\{0,\dots,q-1\}$,
\[
  \lim_{n\to\infty}
  \frac{|\{m\in G_n:m\equiv r\pmod q\}|}{|G_n|}=\frac1q.
\]
In particular, finite subsets of $\N$ have density zero along every additive
F{\o}lner sequence, while $q\N$ has density $1/q$ along every such sequence.
\end{proposition}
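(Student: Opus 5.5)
We prove the three assertions in order: first $|F_n|\to\infty$, then equidistribution of residue classes along additive F{\o}lner sequences, and finally the two consequences stated in the last sentence.

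\emph{Step 1: $|F_n|\to\infty$.} We argue by contradiction. Suppose $|F_n|\le K$ along a subsequence. In the additive case, take $x_n=\max F_n$ and $m=1$. Then $x_n+1\in F_n+1$ but $x_n+1\notin F_n$, so
\[
  |(F_n+1)\cap F_n|\le|F_n|-1,
\]
and the overlap ratio is at most $1-1/K$ along the subsequence, contradicting the F{\o}lner condition. In the multiplicative case, the same argument works with $m=2$ and $x_n=\max F_n$: the element $2x_n$ lies in $2F_n$ but exceeds $\max F_n$, so the ratio is again at most $1-1/K$.

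\emph{Step 2: equidistribution of residues.} Fix $q$ and let $G_n$ be additive F{\o}lner. Write $c_r(n)=|\{m\in G_n: m\equiv r \pmod q\}|$. Translation by $1$ maps the class $r$ of $G_n$ bijectively onto the class $r+1$ of $G_n+1$. Since
\[
  |(G_n+1)\,\triangle\, G_n|=o(|G_n|),
\]
counting elements of class $r+1$ gives $|c_{r+1}(n)-c_r(n)|=o(|G_n|)$, with indices taken mod $q$. Chaining this over $q$ steps shows that all the $c_r(n)$ differ pairwise by $o(|G_n|)$. Because $\sum_r c_r(n)=|G_n|$, each $c_r(n)/|G_n|$ tends to $1/q$. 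This bookkeeping is the only point needing care, namely checking that the symmetric-difference bound transfers to each residue class separately. That holds because the part of $G_n+1$ in class $r+1$ and the part of $G_n$ in class $r+1$ differ by at most $|(G_n+1)\triangle G_n|$ elements.

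\emph{Step 3: consequences.} The statement that $q\N$ has density $1/q$ is the case $r=0$ of Step 2. For a finite set $A$, we have $|A\cap G_n|\le|A|$, while $|G_n|\to\infty$ by Step 1. Hence $|A\cap G_n|/|G_n|\to0$, so $A$ has density zero. I expect no real obstacle beyond handling the multiplicative case of Step 1, where using $\max F_n$ and $m=2$ settles it.
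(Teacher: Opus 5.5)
Your proposal is correct and follows essentially the same route as the paper. Step~1 uses the maximum element of $F_n$ (so that $\max F_n+1$ and $2\max F_n$ lie outside $F_n$), phrased by contradiction, where the paper uses the minimum element (not in $F_n+1$ or $2F_n$) directly; the residue-class shift argument and the consequences in Step~3 are the same as the paper's.
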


\begin{proof}
If $F\subseteq\N$ is finite and nonempty, then its least element does not
belong to $F+1$, and it does not belong to $2F$. Consequently,
\[
  |(F+1)\cap F|\le |F|-1,
  \qquad |2F\cap F|\le |F|-1.
\]
Applying the relevant estimate to a F{\o}lner sequence shows that
$1/|F_n|\to0$.

Now let $(G_n)_n$ be additive and, for $0\le r<q$, put
\[
  c_{n,r}:=\frac{|\{m\in G_n:m\equiv r\pmod q\}|}{|G_n|}.
\]
Translation by $1$ carries the part of $G_n$ in residue class $r$ bijectively
onto the part of $G_n+1$ in residue class $r+1$ (indices being read modulo
$q$). Therefore
\[
  |c_{n,r+1}-c_{n,r}|
  \le \frac{|(G_n+1)\mathbin{\triangle}G_n|}{|G_n|}\longrightarrow0.
\]
Since $\sum_{r=0}^{q-1}c_{n,r}=1$, all $q$ proportions differ from their
average $1/q$ by a quantity tending to zero. This proves the residue-class
statement. The final assertions follow immediately.
\end{proof}

Before turning to examples, we record a basic invariance property of
F{\o}lner sequences.

\begin{proposition}\label{prop:translate}
Let $(F_n)_n$ be an additive F{\o}lner sequence and let $(a_n)_n$ be any sequence
in $\N$. Then $(a_n+F_n)_n$ is also an additive F{\o}lner sequence. Similarly, if
$(F_n)_n$ is a multiplicative F{\o}lner sequence and $(a_n)_n$ is any sequence in
$\N$, then $(a_nF_n)_n$ is also a multiplicative F{\o}lner sequence.
\end{proposition}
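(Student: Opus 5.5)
The plan is to check the Følner condition directly for the translated (or dilated) sets. The key point is that the relevant quantity is invariant under translating (or dilating) the whole set, because translations commute with translations and dilations commute with dilations.

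For the additive case, fix $m\in\N$ and set $F_n':=a_n+F_n$. The map $x\mapsto x+a_n$ is injective on $\N$. We have $F_n'+m=a_n+(F_n+m)$, so
\[
  (F_n'+m)\cap F_n' = a_n+\bigl((F_n+m)\cap F_n\bigr).
\]
This holds because $a_n+X\cap a_n+Y=a_n+(X\cap Y)$ by injectivity. Hence $|(F_n'+m)\cap F_n'|=|(F_n+m)\cap F_n|$, and also $|F_n'|=|F_n|$. The ratio in Definition~\ref{def:folner} is therefore unchanged for every $n$, and it tends to $1$. The companion condition $|F_n'\cap(F_n'-m)|/|F_n'|\to1$ follows from the exact bijection recorded after Definition~\ref{def:folner}. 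Finally, each $F_n'$ is a finite non-empty subset of $\N$, since $a_n\in\N$.

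For the multiplicative case, set $F_n':=a_nF_n$ and argue in the same way. Multiplication by $a_n$ is injective on $\N$, and multiplication is commutative, so $m(a_nF_n)=a_n(mF_n)$. This gives $mF_n'\cap F_n'=a_n(mF_n\cap F_n)$, so the cardinalities agree and $|F_n'|=|F_n|$. The other formulation again follows from the bijection $mF\cap F\to F\cap(F/m)$.

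There is no real obstacle here. The only point needing care is the identity $a+(X\cap Y)=(a+X)\cap(a+Y)$, and its multiplicative analogue, which uses injectivity of translation and dilation on $\N$. It is also worth noting that the sequence $(a_n)_n$ may be completely arbitrary, even unbounded, because the equality of ratios holds exactly for each $n$ and not just asymptotically.
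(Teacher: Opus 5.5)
Your proof is correct and takes the same approach as the paper. Injectivity of translation (resp.\ dilation) by $a_n$ gives $|(a_n+F_n+m)\cap(a_n+F_n)|=|(F_n+m)\cap F_n|$ (resp.\ $|ma_nF_n\cap a_nF_n|=|mF_n\cap F_n|$), so the F{\o}lner ratios are unchanged for every $n$; the paper writes this out only for the multiplicative case and calls the additive case identical.
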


\begin{proof}
We prove the multiplicative case; the additive case is identical. Fix $m\in\N$.
Since multiplication by $a_n$ is injective,
\[
  |ma_nF_n\cap a_nF_n|=|mF_n\cap F_n|,
\]
and the result follows.
\end{proof}

\begin{proposition}\label{prop:primebox-folner}
Let $(p_i)_{i\ge1}$ be an enumeration of the primes. For every
$n\in\N$ and $1\le i\le n$, let $k_{i,n}\in\Nz$, and assume that
$k_{i,n}\to\infty$ as $n\to\infty$ for every fixed $i$. Let
$(a_n)_n\subseteq\N$, and define
\[
F_n:=a_n\{p_1^{e_1}\cdots p_n^{e_n}:0\le e_i\le k_{i,n}
\text{ for }1\le i\le n\}.
\]
The sequence $(F_n)_n$ is a F{\o}lner sequence for $(\N,\cdot)$.
\end{proposition}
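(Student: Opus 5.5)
By Proposition~\ref{prop:translate}, it suffices to treat $a_n=1$. So put
\[
B_n:=\{p_1^{e_1}\cdots p_n^{e_n}:0\le e_i\le k_{i,n}\},
\]
and show that $(B_n)_n$ is multiplicative F{\o}lner. Unique factorization identifies $B_n$ with the box $\prod_{i=1}^n\{0,\dots,k_{i,n}\}\subseteq\Nz^n$ via $x\mapsto(v_{p_i}(x))_{i\le n}$. Hence
\[
|B_n|=\prod_{i=1}^n(k_{i,n}+1).
\]

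Fix $m\in\N$ and choose $N$ so that every prime divisor of $m$ lies among $p_1,\dots,p_N$. Then $v_{p_i}(m)=0$ for $i>N$. For $n\ge N$, an element $x\in B_n$ with exponent vector $(e_i)$ lies in $B_n/m$ (that is, $mx\in B_n$) exactly when $e_i+v_{p_i}(m)\le k_{i,n}$ for all $i\le N$. The coordinates with $i>N$ are unconstrained, because multiplication by $m$ leaves them unchanged and $mx$ involves no prime outside $p_1,\dots,p_n$. Therefore $B_n\cap(B_n/m)$ is itself a box, and
\[
\frac{|B_n\cap(B_n/m)|}{|B_n|}=\prod_{i=1}^{N}\frac{\max(k_{i,n}+1-v_{p_i}(m),0)}{k_{i,n}+1}.
\]
This is a product of a \emph{fixed} number $N$ of factors. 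Each factor tends to $1$ as $n\to\infty$, since $k_{i,n}\to\infty$ for fixed $i$. Hence the ratio tends to $1$. By the bijection $mF\cap F\leftrightarrow F\cap(F/m)$ noted after Definition~\ref{def:folner}, this gives both conditions in the definition.

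The only point needing care is the one just used: the number of constrained coordinates is bounded independently of $n$. The hypothesis only says $k_{i,n}\to\infty$ for each fixed $i$, with no uniformity in $i$, so we must not take a product over all $n$ coordinates with $k_{i,n}$ possibly small. Choosing $N=N(m)$ before letting $n\to\infty$ handles this. Finally, $B_n$ is finite and nonempty (it contains $1$), and $a_nB_n$ is nonempty as well, so Proposition~\ref{prop:translate} applies directly to conclude for $(F_n)_n=(a_nB_n)_n$.
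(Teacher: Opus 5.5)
Your proof is correct and follows essentially the same route as the paper: you reduce to $a_n=1$ via Proposition~\ref{prop:translate}, fix the finitely many primes dividing $m$, and observe that $B_n\cap(B_n/m)$ is a sub-box whose relative size is a product of a fixed number of factors, each tending to $1$. Your use of $\max(\cdot,0)$ only replaces the paper's step of first taking $n$ large enough that $k_{i,n}\ge v_{p_i}(m)$.
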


\begin{proof}
We first treat the case $a_n=1$. Fix $a\in\N$. Choose $r\in\N$ such that
every prime divisor of $a$ belongs to $\{p_1,\dots,p_r\}$, and write
\[
  a=\prod_{i=1}^{r}p_i^{b_i},
\]
where $b_i\ge0$ for all $i$. For $n$ sufficiently large, we have $n\ge r$ and $k_{i,n}\ge b_i$ for
$1\le i\le r$. An element $m=p_1^{e_1}\cdots p_n^{e_n}\in F_n$ then belongs
to $F_n/a$ if and only if
\[
  e_i+b_i\le k_{i,n}\qquad\forall\,1\le i\le r.
\]
Therefore
\[
  |(F_n/a)\cap F_n|
  =\Biggl(\prod_{i=1}^{r}(k_{i,n}+1-b_i)\Biggr)
   \Biggl(\prod_{j=r+1}^{n}(k_{j,n}+1)\Biggr),
\]
while
\[
  |F_n|=\prod_{j=1}^{n}(k_{j,n}+1).
\]
Hence
\[
  \frac{|(F_n/a)\cap F_n|}{|F_n|}
  =\prod_{i=1}^{r}\frac{k_{i,n}+1-b_i}{k_{i,n}+1}\longrightarrow 1,
\]
which proves the claim for the case $a_n=1$. The other case follows from
Proposition~\ref{prop:translate}.
\end{proof}

\begin{remark}\label{rem:primebox}
In the particular case where
\[
  F_n=\{p_1^{e_1}\cdots p_n^{e_n}:0\le e_1,\dots,e_n\le n\},
\]
we call the F{\o}lner sequence $(F_n)_n$ the \emph{prime-box F{\o}lner sequence}
associated to the enumeration of the primes $(p_j)_j$. This example has been
mentioned before in~\cite{Bergelson}.
\end{remark}

\begin{definition}\label{def:density}
Let $A\subseteq\N$. If $(G_n)_n$ is an additive F{\o}lner sequence, define
\[
  \udens_{(G_n)_n}(A):=\limsup_{n\to\infty}\frac{|A\cap G_n|}{|G_n|}
  \qquad\text{and}\qquad
  \ldens_{(G_n)_n}(A):=\liminf_{n\to\infty}\frac{|A\cap G_n|}{|G_n|}.
\]
When these quantities agree, we write
\[
  \dens_{(G_n)_n}(A):=\lim_{n\to\infty}\frac{|A\cap G_n|}{|G_n|}.
\]
If $(F_n)_n$ is a multiplicative F{\o}lner sequence, define analogously
\[
  \umd_{(F_n)_n}(A):=\limsup_{n\to\infty}\frac{|A\cap F_n|}{|F_n|}
  \qquad\text{and}\qquad
  \lmd_{(F_n)_n}(A):=\liminf_{n\to\infty}\frac{|A\cap F_n|}{|F_n|},
\]
and, when the limit exists,
\[
  \md_{(F_n)_n}(A):=\lim_{n\to\infty}\frac{|A\cap F_n|}{|F_n|}.
\]
The limiting formulas are identical; the notation records whether the averaging
sequence is additive or multiplicative. When $G_n=[1,n]$, we omit the subscript
and write $\udens(A)$, $\ldens(A)$, and $\dens(A)$ for the usual upper natural
density, lower natural density, and natural density.
\end{definition}

We end this section by recording that arbitrary additive- and
multiplicative-density values can be realized separately. We use the following
form of Weyl's criterion. Notice that the criterion itself only concerns a
sequence of finite averaging sets; the F{\o}lner property will enter later when
we prove that the relevant exponential averages are asymptotically invariant.

\begin{definition}
Let $(H_n)_n$ be finite nonempty subsets of $\N$ with $|H_n|\to\infty$. A
sequence $(x_m)_{m\in\N}\subset[0,1)$ is said to be \emph{equidistributed along
$(H_n)_n$} if, for every half-open interval $I=[a,b)\subseteq[0,1)$,
\[
  \lim_{n\to\infty}
  \frac{\#\{m\in H_n:x_m\in I\}}{|H_n|}=b-a.
\]
\end{definition}

\begin{lemma}[Weyl's criterion]\label{lem:weyl}
Let $(H_n)_n$ be finite nonempty subsets of $\N$ with $|H_n|\to\infty$. A
sequence $(x_m)_{m\in\N}\subset[0,1)$ is equidistributed along $(H_n)_n$ if and
only if
\[
  \lim_{n\to\infty}\frac1{|H_n|}
  \sum_{m\in H_n}e^{2\pi i kx_m}=0
\]
for every nonzero integer $k$.
\end{lemma}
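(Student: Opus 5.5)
The plan is the classical proof of Weyl's criterion, with the intervals $[1,N]$ replaced by the averaging sets $H_n$. Write $\mu_n:=\frac1{|H_n|}\sum_{m\in H_n}\delta_{x_m}$ for the empirical probability measure on the circle $\mathbb{T}=\R/\Z\cong[0,1)$. Equidistribution along $(H_n)_n$ says $\mu_n(I)\to|I|$ for every half-open interval $I$. The exponential condition says $\int e^{2\pi i kx}\,d\mu_n\to0=\int_0^1 e^{2\pi i kx}\,dx$ for every $k\neq0$. Both statements are about the sequence of measures $\mu_n$ alone, so the hypothesis $|H_n|\to\infty$ plays no real role beyond making the setting sensible.

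\emph{Equidistribution implies the exponential condition.} If $\mu_n(I)\to|I|$ for every half-open $I$, then by linearity $\int g\,d\mu_n\to\int_0^1 g$ for every step function $g$ built from such intervals. Fix $k\neq0$. The real and imaginary parts of $e^{2\pi i kx}$ are continuous on $[0,1]$, so they can be approximated uniformly within $\varepsilon$ by step functions. Since the $\mu_n$ and Lebesgue measure are probability measures, the limit of the integrals is within $2\varepsilon$ of $\int_0^1 e^{2\pi ikx}\,dx=0$. Letting $\varepsilon\to0$ gives this direction.

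\emph{The exponential condition implies equidistribution.} First, linearity extends the hypothesis to every trigonometric polynomial $P$, giving $\int P\,d\mu_n\to\int_0^1P$; the constant term is handled trivially. By Stone–Weierstrass (or Fejér's theorem), trigonometric polynomials are uniformly dense in the continuous $1$-periodic functions, so by the same $\varepsilon$-argument $\int f\,d\mu_n\to\int_0^1 f$ for every continuous $1$-periodic $f$. The step I expect to need the most care is passing from continuous functions to the indicator of $I=[a,b)$, because that indicator is discontinuous. Given $\varepsilon>0$, I would choose continuous periodic functions $f_-\le\one_I\le f_+$ with $\int_0^1(f_+-f_-)<\varepsilon$, taking them piecewise linear with ramps of width $\varepsilon/4$ near $a$ and $b$. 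Near the endpoint $0\equiv1$, I would use a ramp on one side only (or none, if $I$ touches $0$), so that $f_\pm$ remain periodic and the sandwich still holds on $[0,1)$. Then
\[
\int_0^1 f_-\le\liminf_n\mu_n(I)\le\limsup_n\mu_n(I)\le\int_0^1 f_+,
\]
and both outer quantities lie within $\varepsilon$ of $b-a$. Letting $\varepsilon\to0$ gives $\mu_n(I)\to b-a$, which completes the proof.
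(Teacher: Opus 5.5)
Your proof is correct and follows essentially the paper's route: you pass to the empirical measures on $\R/\Z$, use uniform density of trigonometric polynomials for the converse, and then pass to intervals, with your step-function approximation and explicit sandwich $f_-\le\one_I\le f_+$ simply unpacking the paper's appeal to weak-$*$ convergence and to intervals having Lebesgue-null boundary. One small slip: your parenthetical about the point $0\equiv1$ reads backwards. When $a=0$ or $b=1$ (with $I\ne[0,1)$), each of $f_\pm$ still needs a one-sided ramp at $0\equiv1$ to remain continuous and periodic; it is when $I$ stays away from $0$ that no ramp there is needed.
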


\begin{proof}
Let
\[
  \nu_n:=\frac1{|H_n|}\sum_{m\in H_n}\delta_{x_m}
\]
be the empirical probability measure on the circle $\R/\Z$. If the points are
equidistributed, then $\nu_n$ converges weak-$*$ to Lebesgue measure, so every
nonzero Fourier coefficient tends to zero. Conversely, if all nonzero Fourier
coefficients tend to zero, then the integrals of every trigonometric polynomial
converge to its Lebesgue integral. Trigonometric polynomials are uniformly
dense in the continuous functions on the circle, so $\nu_n$ converges weak-$*$
to Lebesgue measure. Applying this to intervals, whose boundaries have
Lebesgue measure zero, gives equidistribution.
\end{proof}

\begin{theorem}\label{theorem:everydensityisrealized}
Let $(G_n)_n$ be an additive F{\o}lner sequence and let $(F_n)_n$ be a
multiplicative F{\o}lner sequence. For every $\alpha,\beta\in[0,1]$, there exist
sets $R,M\subseteq\N$ such that
\[
  \dens_{(G_n)_n}(R)=\alpha
  \qquad\text{and}\qquad
  \md_{(F_n)_n}(M)=\beta.
\]
\end{theorem}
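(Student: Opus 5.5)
The plan is to realize both densities via equidistribution and Weyl's criterion (Lemma~\ref{lem:weyl}), taking $R$ and $M$ to be preimages of the interval $[0,\alpha)$, resp.\ $[0,\beta)$, under suitably equidistributed sequences.

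\emph{Additive case.} Fix an irrational $\theta$ and set $x_m:=\{m\theta\}$ and $R:=\{m\in\N: x_m\in[0,\alpha)\}$. By Proposition~\ref{prop:residue-classes}, $|G_n|\to\infty$. For a nonzero integer $k$, put $S_n:=\sum_{m\in G_n}e^{2\pi i km\theta}$. Shifting $G_n$ by $1$ gives
\[
  \bigl|e^{2\pi i k\theta}S_n-S_n\bigr|\le |(G_n+1)\mathbin{\triangle}G_n|.
\]
Since $e^{2\pi i k\theta}\ne1$, dividing by $|G_n|$ yields $S_n/|G_n|\to0$. Lemma~\ref{lem:weyl} then gives equidistribution along $(G_n)_n$, so $\dens_{(G_n)_n}(R)=\alpha$. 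The endpoint cases $\alpha=0$ and $\alpha=1$ need no separate treatment, since the interval is then empty or all of $[0,1)$.

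\emph{Multiplicative case.} Here I would use a completely additive function into $\R/\Z$. Choose reals $\theta_p$, indexed by primes $p$, that are rationally independent (for example, algebraically independent, or of the form $\theta_p=\log p/\log 2$ suitably modified), and define
\[
  x_m:=\Bigl\{\sum_p v_p(m)\theta_p\Bigr\}.
\]
Then $x_{qm}=\{x_m+x_q\}$ for all $q,m$. Set $M:=\{m: x_m\in[0,\beta)\}$ and $T_n:=\sum_{m\in F_n}e^{2\pi i k x_m}$. Using the dilation $m\mapsto qm$ exactly as in the additive step,
\[
  \bigl|e^{2\pi i k x_q}T_n-T_n\bigr|\le |qF_n\mathbin{\triangle}F_n|=o(|F_n|).
\]
So it suffices that for each $k\ne0$ there is some $q$ with $kx_q\notin\Z$. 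Take $q=2$: we need $k\theta_2\notin\Z$ for every $k\ne0$, i.e.\ $\theta_2$ irrational. One can therefore simply take $\theta_2$ irrational and $\theta_p=0$ for all other $p$, so that $x_m=\{v_2(m)\theta_2\}$.

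The main obstacle is precisely this choice: the quantity $x_q$ must be non-torsion in $\R/\Z$ for a \emph{single} $q$, and then the F{\o}lner property applied to that one dilation already kills every Fourier coefficient. No independence between primes is needed. With this in place, Lemma~\ref{lem:weyl} gives equidistribution of $(x_m)$ along $(F_n)_n$, hence $\md_{(F_n)_n}(M)=\beta$.
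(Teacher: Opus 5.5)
Your proposal is correct and is essentially the paper's proof: the paper also takes $R=\{m:\{\gamma m\}\in[0,\alpha)\}$ and $M=\{m:\{\gamma v_2(m)\}\in[0,\beta)\}$ with $\gamma$ irrational, shows that the Weyl sums are asymptotically invariant under the shift by $1$ (resp.\ the dilation by $2$) via the F{\o}lner symmetric-difference bound, and concludes with Lemma~\ref{lem:weyl}. Your detour through a general completely additive function is harmless but unnecessary, since you end with exactly the same $v_2$ construction.
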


\begin{proof}
Fix $\gamma\in\R\setminus\Q$ and define
\[
  R:=\{m\in\N:\{\gamma m\}\in[0,\alpha)\}.
\]
For each nonzero integer $k$, put
\[
  U_{n,k}:=\frac1{|G_n|}\sum_{m\in G_n}e^{2\pi i k\gamma m}.
\]
Since $(G_n)_n$ is additively F{\o}lner, the symmetric-difference
formulation following Definition~\ref{def:folner} gives
\[
  \left|
  \frac1{|G_n|}\sum_{m\in G_n+1}e^{2\pi i k\gamma m}-U_{n,k}
  \right|
  \le \frac{|(G_n+1)\mathbin{\triangle}G_n|}{|G_n|}
  \longrightarrow0.
\]
The translated sum is $e^{2\pi i k\gamma}U_{n,k}$, and hence
\[
  (1-e^{2\pi i k\gamma})U_{n,k}\longrightarrow0.
\]
Because $\gamma$ is irrational, $U_{n,k}\to0$. Weyl's criterion therefore
shows that $(\{\gamma m\})_{m\in\N}$ is equidistributed along $(G_n)_n$, and
so $\dens_{(G_n)_n}(R)=\alpha$.

For the multiplicative realization, define
\[
  M:=\bigl\{m\in\N:\{\gamma v_2(m)\}\in[0,\beta)\bigr\}.
\]
For each nonzero integer $k$, let
\[
  \chi_k(m):=e^{2\pi i k\gamma v_2(m)},
  \qquad
  V_{n,k}:=\frac1{|F_n|}\sum_{m\in F_n}\chi_k(m).
\]
Since $|\chi_k|=1$ and $(F_n)_n$ is multiplicatively F{\o}lner,
\[
  \left|
  \frac1{|F_n|}\sum_{m\in2F_n}\chi_k(m)-V_{n,k}
  \right|
  \le \frac{|2F_n\mathbin{\triangle}F_n|}{|F_n|}
  \longrightarrow0.
\]
Here the sum over $2F_n$ is indexed by $m=2r$ with $r\in F_n$, so it equals
$e^{2\pi i k\gamma}V_{n,k}$.
Consequently,
\[
  (1-e^{2\pi i k\gamma})V_{n,k}\longrightarrow0.
\]
Thus $V_{n,k}\to0$, and Weyl's criterion gives
$\md_{(F_n)_n}(M)=\beta$.
\end{proof}

\section{Joint ranges and endpoint comparisons}\label{sec:relations}
\subsection{Additive largeness implies multiplicative largeness}
The first result of this section shows that additive upper density one forces
multiplicative largeness along a suitable multiplicative F{\o}lner sequence.
More precisely, if $A\subseteq\N$ has upper natural density one, then one can
choose a multiplicative F{\o}lner sequence along which $A$ has density one. This
gives a positive answer to \cite[Question~9]{Moreira1}.

\begin{proposition}\label{prop:additive-implies-mult}
Let $A\subseteq\N$ satisfy $\udens(A)=1$. Then there exists a multiplicative
F{\o}lner sequence $(H_n)_{n\ge 1}$ such that $\md_{(H_n)_n}(A)=1$.
\end{proposition}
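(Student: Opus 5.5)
The idea is to take a fixed multiplicative F{\o}lner sequence and dilate each of its terms into a long initial interval $[1,N]$ on which $A$ is almost everything. Proposition~\ref{prop:translate} guarantees that dilates of a multiplicative F{\o}lner sequence remain F{\o}lner, so the only task is to choose the dilation factors well. An averaging argument over all admissible dilations will produce a good one.

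\textbf{Setup.} Fix any multiplicative F{\o}lner sequence $(F_n)_n$, for instance the prime-box sequence of Remark~\ref{rem:primebox}. Let $f_n:=\max F_n$. Since $\udens(A)=1$, there are integers $N_k\to\infty$ such that
\[
\varepsilon_k:=\frac{|[1,N_k]\setminus A|}{N_k}\longrightarrow0 .
\]
For each $n$, choose $k=k(n)$ large enough that $N_k\ge f_n$ and $\varepsilon_k f_n\le 1/(2n)$. This is possible because $f_n$ is fixed once $n$ is fixed.

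\textbf{Averaging step.} Put $N:=N_{k(n)}$ and $L:=\lfloor N/f_n\rfloor\ge1$, which also gives $L\ge N/(2f_n)$. For every $a\in[1,L]$ we have $aF_n\subseteq[1,N]$. Count the pairs $(a,x)$ with $a\in[1,L]$, $x\in F_n$ and $ax\notin A$.
\begin{itemize}
\item For a fixed $x$, the map $a\mapsto ax$ is injective into $[1,N]$.
\item Hence, for that $x$, at most $|[1,N]\setminus A|=\varepsilon_k N$ values of $a$ give $ax\notin A$.
\end{itemize}
Summing over $x\in F_n$ and dividing by $L|F_n|$ gives
\[
\frac1L\sum_{a=1}^{L}\frac{|aF_n\setminus A|}{|F_n|}\le\frac{\varepsilon_k N}{L}\le 2\varepsilon_k f_n\le\frac1n .
\]
So some $a_n\in[1,L]$ satisfies $|a_nF_n\setminus A|/|F_n|\le 1/n$.

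\textbf{Conclusion.} Define $H_n:=a_nF_n$. By Proposition~\ref{prop:translate}, $(H_n)_n$ is a multiplicative F{\o}lner sequence. Since $|H_n|=|F_n|$, we get
\[
\frac{|A\cap H_n|}{|H_n|}\ge 1-\frac1n\longrightarrow1 ,
\]
so $\md_{(H_n)_n}(A)=1$.

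\textbf{Main point to get right.} The crucial step is the order of the choices. The F{\o}lner set $F_n$ must be fixed first, so that its spread $f_n$ is a known constant. Only then is the scale $N_k$ chosen, with $\varepsilon_k$ small relative to $1/f_n$. Making this choice in the opposite order would break the averaging bound. Beyond that, the only care needed is the injectivity of $a\mapsto ax$ in the counting.
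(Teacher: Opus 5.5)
Your proof is correct and is essentially the paper's argument: fix a multiplicative F{\o}lner sequence (the paper uses the prime-box one), choose a scale $N$ at which the complement of $A$ has density small compared with $1/\max F_n$, average $|aF_n\setminus A|/|F_n|$ over all dilations $a\le\lfloor N/\max F_n\rfloor$ using injectivity of $a\mapsto ax$, and set $H_n=a_nF_n$ via Proposition~\ref{prop:translate}. The only cosmetic difference is that you require $N\ge f_n$, so that $L\ge N/(2f_n)$ follows from $\lfloor x\rfloor\ge x/2$ for $x\ge1$, whereas the paper requires $N_n\ge 2m_n$ to get the same bound.
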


\begin{proof}
Let $B:=\N\setminus A$. Since $\udens(A)=1$, we have
\[
  \liminf_{N\to\infty}\frac{|B\cap[1,N]|}{N}=0.
\]
Let $(F_n)_n$ be the prime-box F{\o}lner sequence (Remark~\ref{rem:primebox}) for
some enumeration of primes $(p_j)_j$. For each $n$, write
$m_n:=\max F_n=p_1^{n}\cdots p_n^{n}$. Because the displayed lower limit is
zero, such estimates hold for arbitrarily large $N$. We may therefore choose
$N_n\ge2m_n$ so that
\[
  \frac{|B\cap[1,N_n]|}{N_n}\le\frac{1}{n\,m_n}.
\]
Define $T_n=\left\lfloor N_n/m_n\right\rfloor$. Then $T_n\ge 2$, and the
inequality $N_n/m_n\ge2$ also gives
\[
  T_n\ge \frac{N_n}{m_n}-1\ge\frac{N_n}{2m_n}.
\]
Moreover, if
$1\le j\le T_n$ and $r\in F_n$, then
\begin{equation}\label{eq:bound-jr}
  jr\le T_nm_n\le N_n.
\end{equation}
We now average over $j\in\{1,\dots,T_n\}$. By double counting,
\[
  \frac{1}{T_n}\sum_{j=1}^{T_n}\frac{|jF_n\cap B|}{|F_n|}
  =\frac{1}{T_n|F_n|}\sum_{j=1}^{T_n}\sum_{r\in F_n}\one_B(jr)
  =\frac{1}{T_n|F_n|}\sum_{r\in F_n}\#\{j\in\{1,\dots,T_n\}:jr\in B\}.
\]
For fixed $r$, the map $j\mapsto jr$ is injective. Together
with~\eqref{eq:bound-jr}, this gives
\[
  \#\{j\in\{1,\dots,T_n\}:jr\in B\}\le|B\cap[1,N_n]|.
\]
Therefore
\[
  \frac{1}{T_n}\sum_{j=1}^{T_n}\frac{|jF_n\cap B|}{|F_n|}\le\frac{|B\cap[1,N_n]|}{T_n}.
\]
Using $T_n\ge N_n/(2m_n)$, it follows that
\[
  \frac{|B\cap[1,N_n]|}{T_n}\le\frac{|B\cap[1,N_n]|}{N_n}\cdot 2m_n\le\frac{2}{n}.
\]
Hence
\[
  \frac{1}{T_n}\sum_{j=1}^{T_n}\frac{|jF_n\cap B|}{|F_n|}\le\frac{2}{n}.
\]
In particular, there exists $t_n\in\{1,\dots,T_n\}$ such that
\[
  \frac{|t_nF_n\cap B|}{|F_n|}\le\frac{2}{n},
\]
equivalently
\[
  \frac{|t_nF_n\cap A|}{|F_n|}\ge 1-\frac{2}{n}.
\]
Defining $H_n=t_nF_n$, we obtain a multiplicative F{\o}lner sequence by Proposition~\ref{prop:translate}, and
\[
  \md_{(H_n)_n}(A)=\lim_{n\to\infty}\frac{|H_n\cap A|}{|H_n|}=1. \qedhere
\]
\end{proof}

Proposition~\ref{prop:additive-implies-mult} gives a stronger conclusion than
the one asked for in Moreira's question. Consider the multiplicative upper
Banach density defined as
\[
  \md^{*}(E)=\sup\bigl\{\umd_{(F_n)_n}(E):(F_n)_n\text{ is a F{\o}lner sequence in }(\N,\cdot)\bigr\}.
\]
We have just proved that if $\udens(A)=1$, then $\md^{*}(A)=1$.

\begin{remark}\label{rem:moreira-alt}
Moreira gave a different argument in~\cite{Moreira2}. Related published results
appear in \cite{BergelsonMoreiraAffine}; see also
\cite{BergelsonDownarowiczMisiurewicz}. The proof above is included
because it is short, self-contained, and fits naturally with the
F{\o}lner-sequence viewpoint used throughout this paper.
\end{remark}

The hypothesis $\udens(A)=1$ is sharp: for every $\varepsilon>0$,
Moreira constructed a set $A$ with $\dens(A)\ge1-\varepsilon$ and
$\md^*(A)=0$; see \cite[Proposition~7]{Moreira1}.

\subsection{Prescribed joint ranges}
We now prove that additive and multiplicative densities can be prescribed
independently, even when both F{\o}lner sequences are fixed in advance. 

\begin{theorem}\label{thm:joint-range-general}
Let $(G_n)_n$ be an additive F{\o}lner sequence and let $(F_n)_n$ be a
multiplicative F{\o}lner sequence. Then
\[
  \bigl\{(\dens_{(G_n)_n}(A),\md_{(F_n)_n}(A)):A\subseteq\N\bigr\}=[0,1]^2,
\]
where both densities are required to exist. Equivalently, every
$(\alpha,\beta)\in[0,1]^2$ is realized by a single set $A\subseteq\N$.
\end{theorem}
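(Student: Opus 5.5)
The plan is to build $A$ from two "coordinates" that the two averaging schemes see very differently. One is the additive phase $\{\gamma m\}$, which Theorem~\ref{theorem:everydensityisrealized} controls along $(G_n)_n$. The other is the $2$-adic valuation $v_2(m)$ with phase $\{\delta v_2(m)\}$ for an irrational $\delta$, which the multiplicative half of the same proof controls along $(F_n)_n$. The additive average puts mass $2^{-(e+1)}$ on $\{v_2=e\}$, so it sees essentially only small values of $v_2$. The multiplicative average gives each fixed level $\{v_2=e\}$ density zero: $\{\delta v_2(m)\}$ is equidistributed along $(F_n)_n$, and $\{v_2=e\}$ lies in the preimage of arbitrarily short intervals around $\{\delta e\}$. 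So I will use $\{\gamma m\}$ on low valuations to tune the additive density and $\{\delta v_2(m)\}$ on high valuations to tune the multiplicative density.

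Concretely, for $K\in\N$ and $\alpha'\in[0,1]$, set
\[
  A:=\{m:v_2(m)<K,\ \{\gamma m\}\in[0,\alpha')\}\cup\{m:v_2(m)\ge K,\ \{\delta v_2(m)\}\in[0,\beta)\}.
\]
\emph{Multiplicative density.} The first piece lies in $\{v_2<K\}$, a finite union of levels, so its multiplicative density is $0$. The second piece differs from $\{\{\delta v_2(m)\}<\beta\}$ by finitely many levels, so its multiplicative density is $\beta$. Hence $\md_{(F_n)_n}(A)=\beta$.

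\emph{Additive density.} I need the joint equidistribution of $\{\gamma m\}$ restricted to the residue classes defining $\{v_2=e\}$, namely $m\equiv 2^e\pmod{2^{e+1}}$. I will prove it by the shift argument of Theorem~\ref{theorem:everydensityisrealized} applied to the twisted sums $\frac1{|G_n|}\sum_{m\in G_n}e^{2\pi i(k\gamma+j/q)m}$; the frequency $k\gamma+j/q$ is irrational for $k\neq0$, so these sums tend to $0$. Combined with Proposition~\ref{prop:residue-classes}, this gives
\[
  \dens_{(G_n)_n}(A)=\alpha'(1-2^{-K})+c_K,\qquad c_K:=\sum_{e\ge K,\ \{\delta e\}<\beta}2^{-(e+1)}\in[0,2^{-K}].
\]
To pass the infinite sum over $e\ge K$ through the limit, I will bound the tail $\{v_2\ge L\}$ uniformly by $2^{-L}+o(1)$ using Proposition~\ref{prop:residue-classes}. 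Then for $\alpha\in(0,1)$ I choose $K$ so large that $\alpha':=(\alpha-c_K)/(1-2^{-K})\in[0,1]$, which gives $\dens_{(G_n)_n}(A)=\alpha$.

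The main obstacle is the endpoints $\alpha\in\{0,1\}$ when $\beta\ne\alpha$, where this compensation fails because $c_K$ is generally nonzero. By complementation ($A\mapsto\N\setminus A$ sends $(\alpha,\beta)$ to $(1-\alpha,1-\beta)$) it suffices to treat $\alpha=0$. There I want a set of additive density $0$ whose multiplicative density is $\beta$. My first attempt is to let the valuation threshold grow: take $B\subseteq\Nz$ with $\{\{\delta e\}:e\in B\}$ of density $\beta$ in the equidistribution sense, but thin $B$ within blocks $[K_j,K_{j+1})$. Since the additive weight of $\{v_2\ge K\}$ is only $2^{-K}$, I would rather replace the $2$-adic coordinate by one whose large values carry additive weight tending to zero while still being equidistributed multiplicatively. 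An example is $\{m: v_2(m)\ge1,\dots\}$ built across several primes $p_1,\dots,p_r$ with $r\to\infty$ along blocks, whose additive density $\prod(1/p_i)\to0$. I expect verifying the multiplicative equidistribution of such a multi-prime coordinate along an \emph{arbitrary} multiplicative F{\o}lner sequence, via the dilation argument with $p_i F_n$, to be the delicate step.
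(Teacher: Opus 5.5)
Your construction for $\alpha\in(0,1)$ is correct. The levels $\{v_2=e\}$ are multiplicatively null along any multiplicative F{\o}lner sequence. The twisted sums give equidistribution of $\{\gamma m\}$ inside each residue class mod $2^{e+1}$ along $(G_n)_n$. The tail $2^L\N$ justifies summing over levels, and $K,\alpha'$ can be chosen as you say.

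\textbf{The gap.} The theorem also covers the edge $\alpha\in\{0,1\}$ with $\beta\neq\alpha$. After complementation this means producing a set with $\dens_{(G_n)_n}=0$ and $\md_{(F_n)_n}=\beta>0$, and here you give only a plan. Your first route cannot work. Any set $\{m:v_2(m)\in B\}$ with $B\neq\varnothing$ contains a full residue class mod $2^{e+1}$, so it has positive additive density along every additive F{\o}lner sequence, however $B$ is thinned.

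Your second route (moduli $q=p_1\cdots p_r$ with $r\to\infty$) is the right idea, but you have placed the difficulty in the wrong spot. No equidistribution is needed: $q\N$ has multiplicative density one along every multiplicative F{\o}lner sequence, because $qF_n\subseteq q\N$ and $|qF_n\cap F_n|/|F_n|\to1$. What is missing is how to let $q$ grow. If $q$ grows with the size of $m$, e.g.\ $\{m:q_{h(m)}\mid m\}$ with $h\to\infty$, then a dilated sequence $(a_nF_n)_n$ as in Proposition~\ref{prop:translate}, with $a_n$ huge primes, can miss this set entirely. So the growth must be tied to the fixed sequence $(F_n)_n$ itself.

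\textbf{The missing ingredient.} What you need is the paper's carrier set: a set $C\subseteq\N$ with $\md_{(F_n)_n}(C)=1$ and density zero along every additive F{\o}lner sequence. The paper imports it from \cite[Theorem~7.1]{BergelsonDownarowiczMisiurewicz}. It can also be built directly, as follows.
\begin{itemize}
\item Put $q_j:=j!$, and let $j_n$ be the largest $j\le n$ with $|F_n\setminus q_j\N|\le|F_n|/j$. This is well defined since $j=1$ qualifies, and $j_n\to\infty$.
\item Set $C:=\bigcup_n(F_n\cap q_{j_n}\N)$. Then $|F_n\setminus C|\le|F_n|/j_n\to0$.
\item For each $J$, the set $C\setminus q_J\N$ lies in $\bigcup_{n:\,j_n<J}F_n$, which is finite. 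Hence $\udens_{(G_n)_n}(C)\le 1/J!$ for all $J$.
\end{itemize}

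With $C$ available, $M\cap C$ settles your edge case. In fact the paper needs no interior construction at all: $A:=(R\setminus C)\cup(M\cap C)$, with $R,M$ from Theorem~\ref{theorem:everydensityisrealized}, realizes every $(\alpha,\beta)$ at once. Your valuation construction gives an explicit, citation-free set in the interior, but without some carrier-type set it cannot reach the edges.
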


Fix for the moment a multiplicative F{\o}lner sequence $(F_n)_n$, and write
$\md$ for $\md_{(F_n)_n}$.

\begin{lemma}\label{lem:carrier}
For every
$\beta\in[0,1]$ there exists a set $K\subseteq\N$ such that
\[
  \md(K)=\beta
  \qquad\text{and}\qquad
  \dens_{(G_n)_n}(K)=0
\]
for every additive F{\o}lner sequence $(G_n)_n$.
\end{lemma}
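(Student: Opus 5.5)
The plan is to build $K$ as the intersection $K=S\cap M$. Here $M$ is the set of Theorem~\ref{theorem:everydensityisrealized} with $\md(M)=\beta$. The set $S$ will be a ``carrier'' with $\md(S)=1$ along the fixed $(F_n)_n$ that is additively negligible along \emph{every} additive F{\o}lner sequence. Then $\md(K)=\beta$, because $|F_n\setminus S|=o(|F_n|)$ gives
\[
  \bigl||K\cap F_n|-|M\cap F_n|\bigr|\le|F_n\setminus S|.
\]
Likewise $\udens_{(G_n)_n}(K)\le\udens_{(G_n)_n}(S)=0$.

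The key observation is that multiples are multiplicatively full. For every $q\in\N$, every element of $qF_n\cap F_n$ is divisible by $q$, and the F{\o}lner property gives $|qF_n\cap F_n|/|F_n|\to1$. Hence $\md(q\N)=1$. Additively, Proposition~\ref{prop:residue-classes} gives $\dens_{(G_n)_n}(q\N)=1/q$ along any additive F{\o}lner sequence. So I will take $S$ to consist, on successive ranges of $\N$, of multiples of a growing modulus $Q_k:=k!$. The ranges are chosen so slowly that each $F_n$ only sees a modulus it already nearly fills.

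The construction of $S$ goes as follows. First choose $n_1<n_2<\cdots$ such that $|F_n\setminus Q_k\N|\le|F_n|/k$ for all $n\ge n_k$. Next put
\[
  X_k:=\max\bigcup_{n<n_{k+1}}F_n,
\]
increased if necessary so that $X_k$ is strictly increasing, with $X_0:=0$. Define $k(m):=k$ for $m\in(X_{k-1},X_k]$, and set
\[
  S:=\{m:Q_{k(m)}\mid m\}.
\]
Multiplicative side: let $n_k\le n<n_{k+1}$. Every $m\in F_n$ satisfies $m\le X_k$, so $k(m)\le k$ and $Q_{k(m)}\mid Q_k$. Thus $F_n\cap Q_k\N\subseteq S$, and therefore $|F_n\setminus S|\le|F_n|/k\to0$. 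Additive side: fix $j$ and any additive F{\o}lner sequence $(G_n)_n$. Then $S\subseteq[1,X_{j-1}]\cup Q_j\N$. The finite set has density zero and $Q_j\N$ has density $1/Q_j$, both by Proposition~\ref{prop:residue-classes}. Hence $\udens_{(G_n)_n}(S)\le 1/Q_j$ for every $j$, so $\dens_{(G_n)_n}(S)=0$.

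The main obstacle is the bookkeeping in the choice of $X_k$. It must guarantee simultaneously that each $F_n$ in the $k$-th block lies below $X_k$, so that the modulus applied to its elements divides $Q_k$, and that the modulus nevertheless tends to infinity, which is what kills every additive density. Since $(F_n)_n$ is fixed in advance while $(G_n)_n$ is arbitrary, the argument only works because the additive bound uses nothing about $G_n$ beyond Proposition~\ref{prop:residue-classes}. I expect the rest, namely intersecting with $M$ and checking that the limits exist, to be routine.
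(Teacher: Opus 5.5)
Your proof is correct. Its outer structure matches the paper's: you intersect a carrier that is multiplicatively full and additively null with a set of multiplicative density $\beta$ from Theorem~\ref{theorem:everydensityisrealized}. The difference is where the carrier comes from.

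The paper does not construct it. It cites Theorem~7.1 of Bergelson--Downarowicz--Misiurewicz for a set $C$ with $\md(C)=1$ and $\dens_{(G_n)_n}(C)=0$ along every additive F{\o}lner sequence.

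You build it by hand from two facts already available in the paper: $\md(q\N)=1$, since $qF_n\cap F_n\subseteq q\N$, and $\dens_{(G_n)_n}(q\N)=1/q$ by Proposition~\ref{prop:residue-classes}. You combine them through a diagonal block construction with moduli $k!$. The endpoints $X_k$ are chosen after the thresholds $n_k$, so every $F_n$ with $n_k\le n<n_{k+1}$ only sees moduli dividing $k!$. This legitimately tailors the carrier to the fixed $(F_n)_n$, which is all the lemma requires.

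The bookkeeping checks out:
\begin{itemize}
\item $m\le X_k$ forces $k(m)\le k$, hence $F_n\cap k!\N\subseteq S$ and $|F_n\setminus S|\le|F_n|/k$.
\item $S\subseteq[1,X_{j-1}]\cup j!\N$ gives $\udens_{(G_n)_n}(S)\le 1/j!$ for every $j$, uniformly over all additive F{\o}lner sequences.
\end{itemize}

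Your route gives a self-contained, explicit carrier for an arbitrary fixed multiplicative F{\o}lner sequence. It also supplies the set $C$ that the paper invokes again in the proof of Theorem~\ref{thm:joint-range-general}. The paper's route is shorter, but it delegates the only nontrivial step to an external result.
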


\begin{proof}
Let $C\subseteq \N$ be a subset such that  
$$\md(C) = 1 \quad \text{ and } \quad \dens_{(G_n)_n}(C) = 0,$$
for every additive Følner sequence $(G_n)_n$. Its existence is guaranteed by \cite[Theorem~7.1]{BergelsonDownarowiczMisiurewicz}. Let $\beta\in [0,1]$. By Theorem~\ref{theorem:everydensityisrealized}, we can consider a set $D \subset \N$ such that $\md(D) = \beta.$ Put $K:=D\cap C$. Since
$\md(C)=1$,
\[
  0\le\frac{|D\cap F_n|}{|F_n|}-\frac{|K\cap F_n|}{|F_n|}
  \le\frac{|F_n\setminus C|}{|F_n|}\longrightarrow0,
\]
so $\md(K)=\beta$. Moreover, $K\subseteq C$, and therefore $K$ has density
zero along every additive F{\o}lner sequence.
\end{proof}

\begin{proof}[Proof of Theorem~\ref{thm:joint-range-general}]
Fix $(\alpha,\beta)\in[0,1]^2$. Let $C$ be a set such that $\md(C) = 1$ and $\dens_{(G_n)_n}(C) = 0$.  By Theorem~\ref{theorem:everydensityisrealized}, choose $R,M\subseteq\N$ with
\[
  \dens_{(G_n)_n}(R)=\alpha,
  \qquad \md_{(F_n)_n}(M)=\beta.
\]
Separate their additive and multiplicative contributions by setting
\[
  B:=R\setminus C,
  \qquad K:=M\cap C,
  \qquad A:=B\cup K.
\]
The sets $B$ and $K$ are disjoint. Since $R\mathbin{\triangle}B\subseteq C$ and
$K\subseteq C$, the normalized counts of $R$ and $B$ along $(G_n)_n$ differ by
at most $|C\cap G_n|/|G_n|$, while $K$ contributes density zero. Consequently,
\[
  \dens_{(G_n)_n}(A)=\dens_{(G_n)_n}(B)=\alpha.
\]
On the multiplicative side, $B\subseteq\N\setminus C$, so $\md(B)=0$; also
$M\mathbin{\triangle}K=M\setminus C\subseteq\N\setminus C$, so $\md(K)=\beta$.
Thus
\[
  \md_{(F_n)_n}(A)=\beta,
\]
as required.
\end{proof}

\section{Valuation theorems}\label{sec:valuation}

\subsection{One-prime valuation}

Prime valuations linearize multiplication through
\[
  v_p(am)=v_p(a)+v_p(m).
\]
We begin with one valuation coordinate. Fix a prime $p$, let
$(p_n)_{n\ge1}$ be an enumeration of the primes with $p_1=p$, and let $(F_n)_n$
be the associated prime-box F{\o}lner sequence from
Remark~\ref{rem:primebox}.

For a subset $E\subseteq\Nz$, define
\[
  A_E:=\{m\in\N:\vp(m)\in E\}.
\]
We write
\[
  d_0(E):=\lim_{n\to\infty}\frac{|E\cap\{0,\dots,n\}|}{n+1},
\]
whenever this limit exists. Thus $d_0$ is the ordinary natural density on
$\Nz$; the subscript records that the averaging starts at $0$. For
$E\subseteq\Nz$ and $t\in\Z$, we use the shift notation
\[
  E-t:=\{e\in\Nz:e+t\in E\}.
\]
When an intersection is indexed by the empty set, it is understood to be
$\Nz$, whose $d_0$-density is $1$.

\begin{theorem}\label{thm:one-prime}
Let $E\subseteq\Nz$.
\begin{enumerate}[label=\textup{(\roman*)}]
\item For every $n\ge 1$,
\[
  \frac{|A_E\cap F_n|}{|F_n|}=\frac{|E\cap\{0,\dots,n\}|}{n+1}.
\]
In particular, the multiplicative density $\md_{(F_n)_n}(A_E)$ exists if and only
if $d_0(E)$ exists. In that case $\md_{(F_n)_n}(A_E)=d_0(E)$.
\item The natural density of $A_E$ exists and is given by
\[
  \dens(A_E)=\sum_{e\in E}\frac{p-1}{p^{e+1}}.
\]
\end{enumerate}
\end{theorem}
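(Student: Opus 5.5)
For part (i) the plan is to use the product structure of the prime box directly. With $p_1=p$, Remark~\ref{rem:primebox} gives
\[
  F_n=\{p_1^{e_1}\cdots p_n^{e_n}:0\le e_i\le n\}.
\]
By unique factorization, the map $(e_1,\dots,e_n)\mapsto p_1^{e_1}\cdots p_n^{e_n}$ is a bijection from $\{0,\dots,n\}^n$ onto $F_n$. Under this bijection $v_p(m)=e_1$. Hence $m\in A_E$ if and only if $e_1\in E$, and therefore
\[
  |A_E\cap F_n|=|E\cap\{0,\dots,n\}|\,(n+1)^{n-1}.
\]
Dividing by $|F_n|=(n+1)^n$ gives the identity in (i). The claims about existence and equality of the limits follow at once, because the two sequences of ratios coincide term by term.

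For part (ii) the plan is first to treat a single exponent. For each $e\in\Nz$, the set $\{m:v_p(m)=e\}$ equals $p^e\N\setminus p^{e+1}\N$. Since $|p^k\N\cap[1,N]|=\lfloor N/p^k\rfloor$, its count in $[1,N]$ is $\lfloor N/p^e\rfloor-\lfloor N/p^{e+1}\rfloor$. This count differs from $N(p-1)/p^{e+1}$ by at most $1$. It is also bounded above by $N/p^e$.

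Next, for a general $E$, fix a cutoff $L$ and split $E$ into $E\cap[0,L]$ and $E\cap(L,\infty)$.

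\begin{itemize}
\item The finite part contributes $N\sum_{e\in E,\,e\le L}(p-1)/p^{e+1}+O(L)$, using the single-exponent estimate for each of the at most $L+1$ exponents.
\item The tail is contained in $p^{L+1}\N$, so it contributes at most $N/p^{L+1}$.
\end{itemize}

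Dividing by $N$ and letting $N\to\infty$ shows that the $\limsup$ and $\liminf$ of $|A_E\cap[1,N]|/N$ both lie within $p^{-(L+1)}$ of the partial sum $\sum_{e\in E,e\le L}(p-1)/p^{e+1}$. Letting $L\to\infty$, the partial sums converge to the full series, since it is dominated by $\sum_e (p-1)p^{-e-1}=1$. The natural density therefore exists and equals $\sum_{e\in E}(p-1)/p^{e+1}$.

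The only step requiring care is the tail bound in (ii). There the interchange of the limit in $N$ with the infinite sum over $E$ has to be justified, and the uniform estimate $|\{m\le N:v_p(m)>L\}|\le N/p^{L+1}$ is exactly what provides it. Everything else is bookkeeping from unique factorization.
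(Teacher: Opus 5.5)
Your proposal is correct and follows the paper's proof essentially verbatim: part (i) uses the same count $|E\cap\{0,\dots,n\}|(n+1)^{n-1}$ over $(n+1)^n$. Part (ii) uses the same truncation at a cutoff, with the tail absorbed into $p^{L+1}\N$, giving the squeeze between the partial sum and the partial sum plus $p^{-(L+1)}$; the only cosmetic difference is that you work with explicit floor-function counts in $[1,N]$, whereas the paper quotes the natural densities of $p^e\N\setminus p^{e+1}\N$ and $p^{M+1}\N$ directly.
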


\begin{remark}\label{rem:convention}
Averaging instead over $\{1,\dots,n\}$ gives the same limit, whenever either
limit exists.
\end{remark}

\begin{proof}
(i) Every element $m\in F_n$ has a unique representation
$m=p_1^{e_1}\cdots p_n^{e_n}$, $0\le e_1,\dots,e_n\le n$. Since $p_1=p$, we have
$\vp(m)=e_1$. Therefore
\[
  m\in A_E\cap F_n\iff e_1\in E\cap\{0,\dots,n\}.
\]
For each admissible value of $e_1$, there are exactly $(n+1)^{n-1}$ choices for
$e_2,\dots,e_n$. Hence $|A_E\cap F_n|=|E\cap\{0,\dots,n\}|(n+1)^{n-1}$, while
$|F_n|=(n+1)^{n}$. Thus
\[
  \frac{|A_E\cap F_n|}{|F_n|}=\frac{|E\cap\{0,\dots,n\}|}{n+1}.
\]

(ii) For each $e\in\Nz$, define
\[
  A_e:=A_{\{e\}}=\{m\in\N:\vp(m)=e\}=p^{e}\N\setminus p^{e+1}\N.
\]
Then $\dens(A_e)=\dfrac{1}{p^{e}}-\dfrac{1}{p^{e+1}}=\dfrac{p-1}{p^{e+1}}$. The
sets $A_e$ are pairwise disjoint and $A_E=\bigsqcup_{e\in E}A_e$. For $M\ge0$,
let $E_M:=E\cap\{0,\dots,M\}$. Then
\[
  \bigsqcup_{e\in E_M}A_e\subseteq A_E
  \subseteq\Bigl(\bigsqcup_{e\in E_M}A_e\Bigr)\sqcup\Bigl(\bigsqcup_{e>M}A_e\Bigr).
\]
Moreover $\bigsqcup_{e>M}A_e=p^{M+1}\N$, so this tail has natural density
$p^{-(M+1)}$. Hence
\[
  \sum_{e\in E_M}\frac{p-1}{p^{e+1}}\le\ldens(A_E)\le\udens(A_E)
  \le\sum_{e\in E_M}\frac{p-1}{p^{e+1}}+\frac{1}{p^{M+1}}.
\]
Letting $M\to\infty$ gives
\[
\dens(A_E)=\sum_{e\in E}\frac{p-1}{p^{e+1}}.
\]
\end{proof}

With this result, we are now able to construct an explicit example illustrating
the independence phenomenon established in Section~\ref{sec:relations}.

\begin{example}\label{ex:v2-odd}
Let $A:=\{n\in\N:v_2(n)\text{ is odd}\}$. Then
\[
  \dens(A)=\frac{1}{3}\qquad\text{and}\qquad\md_{(F_n)_n}(A)=\frac{1}{2}.
\]
Indeed, take $p=2$ and $E=\{1,3,5,\dots\}$, the set of positive odd integers.
Then $d_0(E)=1/2$, so Theorem~\ref{thm:one-prime}(i) gives
$\md_{(F_n)_n}(A)=1/2$. Also
\[
  \dens(A)=\sum_{j=0}^{\infty}\frac{1}{2^{2j+2}}=\frac{1}{3}.
\]
\end{example}

We end this subsection with a simple probabilistic model showing that the
prescribed multiplicative density occurs generically.

\begin{proposition}\label{prop:random}
Fix a prime $p$, and let $(X_e)_{e\ge 0}$ be independent Bernoulli random
variables with
\[
  \mathbb{P}(X_e=1)=\theta,\qquad\mathbb{P}(X_e=0)=1-\theta,
\]
where $\theta\in[0,1]$. Define the random exponent set
$E_\omega:=\{e\in\Nz:X_e(\omega)=1\}$ and the corresponding random valuation set
$A_\omega:=\{n\in\N:\vp(n)\in E_\omega\}$. Then:
\begin{enumerate}[label=\textup{(\roman*)}]
\item with probability one, the asymptotic density of $E_\omega$ exists and
  equals $\theta$;
\item with probability one, the multiplicative density
  $\md_{(F_n)_n}(A_\omega)$ exists and equals $\theta$, where $(F_n)_n$ is the
  prime-box multiplicative F{\o}lner sequence associated to $p$;
\item for every $\omega$, the natural density of $A_\omega$ exists and satisfies
\[
  \dens(A_\omega)=\sum_{e=0}^{\infty}X_e(\omega)\,\frac{p-1}{p^{e+1}}.
\]
\end{enumerate}
Moreover, $\mathbb{E}[\dens(A_\omega)]=\theta$.
\end{proposition}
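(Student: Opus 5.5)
The proof reduces each part to Theorem~\ref{thm:one-prime} applied pathwise to the random exponent set $E_\omega$, combined with the strong law of large numbers.

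For (i), apply the strong law of large numbers to the i.i.d.\ bounded variables $(X_e)_{e\ge0}$. Almost surely,
\[
  \frac{|E_\omega\cap\{0,\dots,n\}|}{n+1}=\frac1{n+1}\sum_{e=0}^{n}X_e(\omega)\longrightarrow\mathbb{E}[X_0]=\theta .
\]
So $d_0(E_\omega)$ exists and equals $\theta$ almost surely. The degenerate cases $\theta\in\{0,1\}$ are trivial, since then $X_e$ is almost surely constant.

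For (ii), work on the full-measure event from (i) and invoke Theorem~\ref{thm:one-prime}(i) with $E=E_\omega$. The identity there holds exactly for every $n$, so $\md_{(F_n)_n}(A_\omega)$ exists and equals $d_0(E_\omega)=\theta$. We need $(F_n)_n$ to be the prime-box sequence for an enumeration with $p_1=p$, which is the convention fixed at the start of this subsection.

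For (iii), no probability is involved. For each fixed $\omega$, $E_\omega$ is just some subset of $\Nz$, and Theorem~\ref{thm:one-prime}(ii) gives $\dens(A_\omega)=\sum_{e\in E_\omega}(p-1)p^{-(e+1)}=\sum_{e\ge0}X_e(\omega)(p-1)p^{-(e+1)}$.

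For the expectation, the series has nonnegative terms bounded by the summable sequence $(p-1)p^{-(e+1)}$. Monotone (or dominated) convergence therefore lets us exchange expectation and sum:
\[
  \mathbb{E}[\dens(A_\omega)]=\theta\sum_{e\ge0}\frac{p-1}{p^{e+1}}=\theta .
\]
Measurability of $\omega\mapsto\dens(A_\omega)$ follows because it is a pointwise limit of finite sums of the measurable $X_e$.

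There is no real obstacle here. The only points needing care are invoking the strong law (or, for Bernoulli variables, a Borel--Cantelli argument with a fourth-moment bound) and justifying the interchange of sum and expectation.
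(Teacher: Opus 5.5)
Your proposal is correct and follows the paper's proof essentially step for step: the strong law of large numbers for (i), the exact finite-box identity of Theorem~\ref{thm:one-prime}(i) for (ii), Theorem~\ref{thm:one-prime}(ii) applied pathwise for (iii), and an interchange of expectation and sum for the final claim. The paper justifies that interchange by Tonelli's theorem, and your monotone convergence argument is the same justification; your remarks on measurability and on the convention $p_1=p$ are minor points the paper leaves implicit.
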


\begin{proof}
By the strong law of large numbers, $\sum_{e=0}^{n}X_e(\omega)/(n + 1)\to\theta$
for almost every $\omega$. Since $\sum_{e=0}^{n}X_e(\omega)=|E_\omega\cap\{0,\dots,n\}|$,
this proves (i), and (ii) then follows from
Theorem~\ref{thm:one-prime}(i). For every $\omega$, part (iii) follows directly
from Theorem~\ref{thm:one-prime}(ii), applied to the deterministic set
$E_\omega$. Finally, since
$\sum_{e=0}^{\infty}\frac{p-1}{p^{e+1}}=1$, Tonelli's theorem gives
\[
  \mathbb{E}[\dens(A_\omega)]=\sum_{e=0}^{\infty}\mathbb{E}[X_e]\,\frac{p-1}{p^{e+1}}
  =\theta\sum_{e=0}^{\infty}\frac{p-1}{p^{e+1}}=\theta. \qedhere
\]
\end{proof}

\subsection{Finite-prime valuation}\label{subsec:finite-prime}

Let $k\in\N$, let $(p_n)_{n\ge1}$ be an enumeration of the primes, and use the
prime-box F{\o}lner sequence from Remark~\ref{rem:primebox}. In the box, the
first $k$ exponent coordinates vary independently and uniformly. For natural
density, the corresponding factorization comes instead from the independence
of divisibility conditions at distinct primes.

\begin{theorem}\label{thm:finite-prime}
For each $i\in\{1,\dots,k\}$,
let $E_i\subseteq\Nz$, and define
\[
  A_i:=\{m\in\N:\vp[p_i](m)\in E_i\},\qquad A:=\bigcap_{i=1}^{k}A_i.
\]
Then:
\begin{enumerate}[label=\textup{(\roman*)}]
\item For every $n\ge k$,
\[
  \frac{|A\cap F_n|}{|F_n|}=\prod_{i=1}^{k}\frac{|E_i\cap\{0,\dots,n\}|}{n+1}.
\]
In particular, if the densities $d_0(E_i)$ exist for all $i=1,\dots,k$, then the
multiplicative density of $A$ with respect to $(F_n)_n$ exists and is given
by
\[
  \md_{(F_n)_n}(A)=\prod_{i=1}^{k}d_0(E_i).
\]
\item The natural density of $A$ exists and is given by
\[
  \dens(A)
  =\prod_{i=1}^{k}\Biggl(\sum_{e\in E_i}\frac{p_i-1}{p_i^{e+1}}\Biggr)
  =\prod_{i=1}^{k}\dens(A_i).
\]
\end{enumerate}
\end{theorem}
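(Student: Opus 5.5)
Part (i) is a counting argument of the same kind as Theorem~\ref{thm:one-prime}(i). For $n\ge k$, each $m\in F_n$ has a unique representation $m=p_1^{e_1}\cdots p_n^{e_n}$ with $0\le e_j\le n$. Since $\vp[p_i](m)=e_i$ for $i\le k$, we get
\[
  m\in A\cap F_n\iff e_i\in E_i\cap\{0,\dots,n\}\ \text{for } 1\le i\le k,
\]
and the coordinates $e_{k+1},\dots,e_n$ are unrestricted. Hence
\[
  |A\cap F_n|=\Bigl(\prod_{i=1}^k|E_i\cap\{0,\dots,n\}|\Bigr)(n+1)^{n-k}.
\]
Dividing by $|F_n|=(n+1)^n$ gives the displayed product. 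If each $d_0(E_i)$ exists, then the limit of a finite product of convergent sequences is the product of the limits.

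For part (ii), I will first handle singleton exponent sets. Fix $e_1,\dots,e_k$ and set $A_{\mathbf e}:=\{m:\vp[p_i](m)=e_i\ \forall i\le k\}$. With $Q:=\prod_i p_i^{e_i}$, this is the set of $m=Qr$ with $r$ coprime to $P:=p_1\cdots p_k$. Membership in $A_{\mathbf e}$ depends only on $m$ modulo $QP$, so $A_{\mathbf e}$ is periodic and its natural density equals its proportion in one period. By the Chinese remainder theorem, the condition modulo $QP=\prod_i p_i^{e_i+1}$ factors into independent conditions modulo each $p_i^{e_i+1}$. Each condition cuts out exactly $p_i-1$ residues out of $p_i^{e_i+1}$, so
\[
  \dens(A_{\mathbf e})=\prod_{i=1}^k\frac{p_i-1}{p_i^{e_i+1}}.
\]
Alternatively, one can use inclusion–exclusion on the multiples of $Q$, $Qp_i$, and so on, but the CRT route is cleaner.

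Next I truncate, as in Theorem~\ref{thm:one-prime}(ii). For $M\ge0$, let $E_i^M:=E_i\cap\{0,\dots,M\}$. The set $A$ contains the finite disjoint union $\bigsqcup A_{\mathbf e}$ over $\mathbf e\in\prod_iE_i^M$, and is contained in that union together with $\bigcup_{i=1}^k p_i^{M+1}\N$. The extra set has upper density at most $\sum_i p_i^{-(M+1)}$. Finite additivity of natural density over the periodic pieces gives
\[
  \prod_i\sum_{e\in E_i^M}\frac{p_i-1}{p_i^{e+1}}\le\ldens(A)\le\udens(A)\le\prod_i\sum_{e\in E_i^M}\frac{p_i-1}{p_i^{e+1}}+\sum_i p_i^{-(M+1)}.
\]
Letting $M\to\infty$ yields the product formula. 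The final identity $\prod_i\dens(A_i)$ is then just Theorem~\ref{thm:one-prime}(ii) applied to each factor.

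The only real content is the exact density of the periodic set $A_{\mathbf e}$, that is, the independence of the local conditions at distinct primes. I expect the CRT count to settle it directly, and everything else is bookkeeping.
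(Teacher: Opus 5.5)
Your proposal is correct and follows the paper's proof essentially step for step. Part (i) is the same box count. Part (ii) uses the same decomposition into the pieces $\{m : v_{p_i}(m)=e_i \text{ for all } i\le k\}$, the same truncation at $M$, the same tail bound by $\bigcup_{i} p_i^{M+1}\N$, and the same squeeze. The only differences are cosmetic. You get each piece's density from a CRT residue count, where the paper applies inclusion--exclusion to the cofactor $r$ and divides by $Q$. You also factor the truncated sum into a product before letting $M\to\infty$, which removes the need for the paper's final appeal to Tonelli.
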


\begin{proof}
(i) Fix $n\ge k$. Every element of $F_n$ has a unique representation
$m=p_1^{e_1}\cdots p_n^{e_n}$, $0\le e_1,\dots,e_n\le n$. For every $1\le i\le k$,
we have $\vp[p_i](m)=e_i$. Thus $m\in A$ if and only if
$e_i\in E_i\cap\{0,\dots,n\}$ for $1\le i\le k$. Hence
\[
  |A\cap F_n|=\Biggl(\prod_{i=1}^{k}|E_i\cap\{0,\dots,n\}|\Biggr)(n+1)^{n-k}.
\]
Dividing by $|F_n|=(n+1)^{n}$ gives the claim.

(ii) For each $1\le i\le k$ and each $e\in\Nz$, let
$B_{i,e}:=\{m\in\N:\vp[p_i](m)=e\}$. For $\mathbf e=(e_1,\dots,e_k)\in\Nz^{k}$, set
$C_{\mathbf e}:=\bigcap_{i=1}^{k}B_{i,e_i}$. Then the sets $C_{\mathbf e}$ are
pairwise disjoint and $A=\bigsqcup_{\mathbf e\in E_1\times\cdots\times E_k}C_{\mathbf e}$.
Fix $\mathbf e=(e_1,\dots,e_k)$ and write $Q:=\prod_{i=1}^{k}p_i^{e_i}$. An integer
$m$ belongs to $C_{\mathbf e}$ if and only if $m=Qn$ and $n$ is not divisible by
any of $p_1,\dots,p_k$. By inclusion--exclusion, the set of such $n$ has natural density
$\prod_{i=1}^{k}(1-1/p_i)$. Multiplying every element of a set by $Q$ divides
its natural density by $Q$. Therefore
\[
  \dens(C_{\mathbf e})=\frac{1}{Q}\prod_{i=1}^{k}\Bigl(1-\frac{1}{p_i}\Bigr)
  =\prod_{i=1}^{k}\frac{p_i-1}{p_i^{e_i+1}}.
\]
For $M\ge0$, let
\[
  A^{(M)}:=\bigsqcup_{\mathbf e\in
  (E_1\cap[0,M])\times\cdots\times(E_k\cap[0,M])}C_{\mathbf e}.
\]
This is a finite disjoint union, and hence
\[
  \dens(A^{(M)})
  =\sum_{\mathbf e\in
  (E_1\cap[0,M])\times\cdots\times(E_k\cap[0,M])}
  \prod_{i=1}^{k}\frac{p_i-1}{p_i^{e_i+1}}.
\]
Moreover,
\[
  A\setminus A^{(M)}
  \subseteq\bigcup_{i=1}^{k}\{m:\vp[p_i](m)\ge M+1\}
  =\bigcup_{i=1}^{k}p_i^{M+1}\N,
\]
so
\[
  \udens(A\setminus A^{(M)})
  \le\sum_{i=1}^{k}p_i^{-(M+1)}\longrightarrow0.
\]
Since $A^{(M)}\subseteq A$, we have
\[
  \dens(A^{(M)})
  \le \ldens(A)\le\udens(A)
  \le \dens(A^{(M)})+\sum_{i=1}^{k}p_i^{-(M+1)}.
\]
Letting $M\to\infty$ shows that the lower and upper densities of $A$ agree and
that
\[
  \dens(A)
  =\sum_{\mathbf e\in E_1\times\cdots\times E_k}
  \prod_{i=1}^{k}\frac{p_i-1}{p_i^{e_i+1}}
  =\prod_{i=1}^{k}\Biggl(\sum_{e\in E_i}
  \frac{p_i-1}{p_i^{e+1}}\Biggr).
\]
The last equality follows from Tonelli's theorem (all terms are nonnegative),
and the final expression equals $\prod_{i=1}^{k}\dens(A_i)$ by
Theorem~\ref{thm:one-prime}(ii).
\end{proof}

The following result shows that higher-order multiplicative correlations are
completely controlled by additive intersections of shifted exponent sets. The
reason is that $m\in A/a$ means $am\in A$, and multiplication by $a$ adds the
fixed vector of valuations $(v_{p_i}(a))_i$ to the valuation vector of $m$.

\begin{proposition}\label{prop:correlations}
Let $\ell\in\N$, let $a_1,\dots,a_\ell\in\N$, and for each $j=1,\dots,\ell$ and
each $i=1,\dots,k$ set $t_{j,i}:=\vp[p_i](a_j)$ and $t_{0,i}:=0$. Assume that for
each $i$ the asymptotic density
$d_0\bigl(\bigcap_{j=0}^{\ell}(E_i-t_{j,i})\bigr)$ exists. Then the
multiplicative density of $A\cap A/a_1\cap\cdots\cap A/a_\ell$ with respect to
$(F_n)_n$ exists and equals
\[
  \md_{(F_n)_n}\bigl(A\cap A/a_1\cap\cdots\cap A/a_\ell\bigr)
  =\prod_{i=1}^{k}d_0\Bigl(\bigcap_{j=0}^{\ell}(E_i-t_{j,i})\Bigr).
\]
\end{proposition}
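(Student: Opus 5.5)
The plan is to reduce the correlation set to a set of the form treated in Theorem~\ref{thm:finite-prime} and then read off the density from part~(i) of that theorem. The crucial observation is that $A/a$ is again a finite-prime valuation set, defined by the shifted exponent sets $E_i-t$.

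First I will compute $A/a_j$ explicitly. By definition, $m\in A/a_j$ iff $a_jm\in A$. That holds iff $\vp[p_i](a_jm)\in E_i$ for every $1\le i\le k$. Since $\vp[p_i](a_jm)=\vp[p_i](m)+t_{j,i}$, this is equivalent to
\[
  \vp[p_i](m)\in E_i-t_{j,i}\qquad\text{for all } 1\le i\le k,
\]
using the shift convention $E-t=\{e\in\Nz:e+t\in E\}$; here $t_{j,i}\ge0$, so no negative shifts occur. The case $j=0$, with $a_0:=1$ and $t_{0,i}=0$, recovers $A$ itself.

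A point worth stressing is that $a_j$ may have prime factors outside $\{p_1,\dots,p_k\}$. These play no role, because membership in $A$ depends only on the first $k$ valuations, and multiplying by $a_j$ changes those valuations only through the $t_{j,i}$.

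Intersecting over $j=0,\dots,\ell$ and exchanging the order of the two intersections (over $j$ and over $i$) gives
\[
  A\cap A/a_1\cap\cdots\cap A/a_\ell
  =\bigcap_{i=1}^{k}\{m\in\N:\vp[p_i](m)\in E_i'\},
  \qquad E_i':=\bigcap_{j=0}^{\ell}(E_i-t_{j,i}).
\]
This is exactly the set $A$ of Theorem~\ref{thm:finite-prime} with the exponent sets $E_i$ replaced by $E_i'\subseteq\Nz$.

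Theorem~\ref{thm:finite-prime}(i) then gives, for every $n\ge k$, the exact identity
\[
  \frac{|(A\cap A/a_1\cap\cdots\cap A/a_\ell)\cap F_n|}{|F_n|}
  =\prod_{i=1}^{k}\frac{|E_i'\cap\{0,\dots,n\}|}{n+1}.
\]
By hypothesis each $d_0(E_i')$ exists, so the right-hand side converges to $\prod_i d_0(E_i')$. This is the claimed formula, and the existence of the multiplicative density follows at the same time.

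I do not expect any genuine obstacle. The only step requiring care is the set-theoretic identity in the first step, in particular checking that the shift convention matches and that primes of $a_j$ outside the first $k$ are harmless. Everything else is a direct invocation of Theorem~\ref{thm:finite-prime}(i).
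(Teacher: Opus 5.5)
Your proposal is correct and follows the paper's proof essentially verbatim: you rewrite $A\cap A/a_1\cap\cdots\cap A/a_\ell$ as the finite-prime valuation set with exponent sets $E_i'=\bigcap_{j=0}^{\ell}(E_i-t_{j,i})$, and then invoke Theorem~\ref{thm:finite-prime}(i). Your remarks that the shifts $t_{j,i}$ are nonnegative and that prime factors of $a_j$ outside $\{p_1,\dots,p_k\}$ play no role are correct details which the paper leaves implicit.
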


\begin{proof}
Let $B:=A\cap A/a_1\cap\cdots\cap A/a_\ell$. Fix $m\in\N$. Then $m\in B$ if and
only if, for every $i\in\{1,\dots,k\}$,
\[
  \vp[p_i](m)\in E_i\quad\text{and}\quad\vp[p_i](a_jm)=\vp[p_i](m)+t_{j,i}\in E_i
  \quad(1\le j\le\ell).
\]
Equivalently, $\vp[p_i](m)\in\bigcap_{j=0}^{\ell}(E_i-t_{j,i})$ for
$1\le i\le k$. Thus
\[
  B=\bigcap_{i=1}^{k}\Bigl\{m\in\N:\vp[p_i](m)\in\bigcap_{j=0}^{\ell}(E_i-t_{j,i})\Bigr\}.
\]
Now apply part (i) of Theorem~\ref{thm:finite-prime} to the new exponent sets
$E_i':=\bigcap_{j=0}^{\ell}(E_i-t_{j,i})$, which gives the stated formula.
\end{proof}

This shows that multiplicative recurrence is additive recurrence in exponent
space.

\subsection{Infinite-prime valuation}
We now impose valuation conditions at countably many primes. Throughout this
subsection, $(p_i)_{i\ge1}$ is an enumeration of the primes and $(F_n)_n$
is the associated prime-box F{\o}lner sequence. Every infinite product below is
understood as the limit of its finite partial products; because all factors lie
in $[0,1]$, this limit always exists (possibly as zero).

For $E\subseteq\Nz$ we write
\[
  \delta_n(E):=\frac{|\{0,\dots,n\}\setminus E|}{n+1}=1-\frac{|E\cap\{0,\dots,n\}|}{n+1}.
\]

\begin{theorem}\label{thm:infinite-prime}
Let $(E_i)_{i\ge1}$ be a collection of subsets of $\Nz$. Set
\[
  A_i:=\{n\in\N:\vp[p_i](n)\in E_i\},
  \qquad A:=\bigcap_{i=1}^{\infty}A_i.
\]
\begin{enumerate}[label=\textup{(\roman*)}]
    \item For every $n \in \N$
    \[
      \frac{|A\cap F_n|}{|F_n|}
      =\one_{\{\forall j>n,\ 0\in E_j\}}
       \prod_{i=1}^{n}\frac{|E_i\cap\{0,\ldots,n\}|}{n+1}.
    \]
    In particular, if the densities $d_0(E_i)$ exist for all $i\in\N$ and
    \[
      \sum_{i\ge1}\sup_{n\ge0}\delta_n(E_i)<\infty,
    \]
    then the multiplicative density of $A$ with respect to $(F_n)_n$
    exists and is given by
    \[
      \md_{(F_n)_n}(A)=\prod_{i=1}^{\infty}d_0(E_i).
    \]
    \item If
    \[
      \sum_{i=1}^{\infty}\bigl(1-\dens(A_i)\bigr)<\infty,
    \]
    then the natural density of $A$ exists and satisfies
    \[
      \dens(A)=\prod_{i=1}^{\infty}\dens(A_i).
    \]
\end{enumerate}
\end{theorem}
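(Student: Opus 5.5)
For part (i) I will repeat the counting argument of Theorem~\ref{thm:finite-prime}(i). Fix $n$ and write each $m\in F_n$ uniquely as $m=p_1^{e_1}\cdots p_n^{e_n}$ with $0\le e_i\le n$.

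For $j>n$ we have $\vp[p_j](m)=0$. Hence $m\in A_j$ for all $j>n$ exactly when $0\in E_j$ for every $j>n$. This condition does not depend on $m$, and it produces the indicator factor. When the indicator equals $1$, the condition $m\in A$ reduces to $e_i\in E_i\cap\{0,\dots,n\}$ for $1\le i\le n$. Counting these tuples and dividing by $(n+1)^n$ gives the displayed identity.

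For the limit statement I will first note a consequence of the summability hypothesis. It forces $\delta_n(E_j)\to 0$ uniformly enough that $\sup_n\delta_n(E_j)<1$ for large $j$. More importantly, $\delta_0(E_j)=\one_{\{0\notin E_j\}}$ is summable, so $0\in E_j$ for all but finitely many $j$. There are two cases.

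\emph{Some $E_j$ misses $0$.} Then the indicator vanishes for $n<j_{\max}$ but equals $1$ for all large $n$, so the indicator does not affect the limit. If we are unlucky and some $E_j$ with $0\notin E_j$ also has $d_0(E_j)=0$ or similar, this is consistent with the claimed product anyway.

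\emph{Main estimate.} With $x_{i,n}:=1-\delta_n(E_i)\in[0,1]$, I need
\[
\prod_{i=1}^n x_{i,n}\to\prod_{i\ge1}d_0(E_i).
\]
Fix $K$ and split the product as $\prod_{i\le K}x_{i,n}\cdot\prod_{K<i\le n}x_{i,n}$. The first factor tends to $\prod_{i\le K}d_0(E_i)$. For the tail I use $\prod(1-y_i)\ge 1-\sum y_i$, which gives
\[
1\ge\prod_{K<i\le n}x_{i,n}\ge 1-\sum_{i>K}\sup_{n}\delta_n(E_i)=:1-\varepsilon_K,
\]
with $\varepsilon_K\to0$ by hypothesis. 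Therefore $\limsup$ and $\liminf$ both lie between $(1-\varepsilon_K)\prod_{i\le K}d_0(E_i)$ and $\prod_{i\le K}d_0(E_i)$. Letting $K\to\infty$ gives the result, and this works whether the infinite product is positive or zero. This uniform tail control is the main point, and it is precisely why a $\sup_n$ appears in the hypothesis.

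For part (ii) I will approximate $A$ by the finite intersections $A^{(K)}:=\bigcap_{i\le K}A_i$. Theorem~\ref{thm:finite-prime}(ii) gives $\dens(A^{(K)})=\prod_{i\le K}\dens(A_i)$. Since $A\subseteq A^{(K)}$, we get $\udens(A)\le\prod_{i\le K}\dens(A_i)$, which decreases to the infinite product.

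For the lower bound, note that $A^{(K)}\setminus A\subseteq\bigcup_{i>K}(\N\setminus A_i)$. The main obstacle is that upper density is only finitely subadditive, so countably many complements cannot be bounded term by term. I will split $\N\setminus A_i$ into two parts.

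\emph{Points with $\vp[p_i]\ge1$.} These satisfy $\vp[p_i](m)\ge1$ and $p_i\mid m$. For $i$ large, $p_i>N$ is impossible for $m\le N$, so only $i$ with $p_i\le N$ contribute.

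\emph{Points with $\vp[p_i]=0$.} If $0\notin E_i$, then $\dens(A_i)\le 1-(p_i-1)/p_i$, so $1-\dens(A_i)\ge 1-1/p_i\ge1/2$. Summability then forces $0\in E_i$ for all large $i$, and I may take $K$ beyond these $i$.

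Then for $i>K$, the complement $\N\setminus A_i$ consists of multiples of $p_i$ whose valuation lies outside $E_i$. Writing it as $\bigsqcup_{e\notin E_i,e\ge1}\{\vp[p_i]=e\}$, I bound its count in $[1,N]$ by $\sum_{e\notin E_i}N p_i^{-e}\le \frac{p_i}{p_i-1}\,N\,(1-\dens(A_i))\le 2N(1-\dens(A_i))$. This bound holds for every $N$, not only asymptotically, because $\#\{m\le N: p^e\|m\}\le N/p^e$, and the comparison with $(p-1)/p^{e+1}$ costs only the factor $p/(p-1)$. Summing over $i>K$ gives $\udens(A^{(K)}\setminus A)\le 2\sum_{i>K}(1-\dens(A_i))\to0$. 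Hence $\ldens(A)\ge\prod_{i\le K}\dens(A_i)-o(1)$, and letting $K\to\infty$ completes part (ii).
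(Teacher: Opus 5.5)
Your proof is correct. The counting identity in (i) and all of (ii) follow the paper's proof essentially verbatim. In (ii) the paper also approximates $A$ by $A^{(m)}$ and uses $1-\dens(A_i)\ge 1/2$ when $0\notin E_i$ to get $0\in E_i$ eventually. It then bounds $|(\N\setminus A_i)\cap[1,N]|$ by $\frac{p_i}{p_i-1}N(1-\dens(A_i))$ uniformly in $N$, which is what lets the countable union be summed.

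Where you genuinely differ is the limit in (i).

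The paper's route is as follows.
\begin{itemize}
\item If some $d_0(E_k)=0$, the product tends to $0$ trivially.
\item If all $d_0(E_i)>0$, it passes to logarithms. It dominates $\one_{\{i\le n\}}\log(1-\delta_n(E_i))$ by $2\sup_m\delta_m(E_i)$, using $|\log(1-x)|\le 2x$ for $x\le 1/2$, and applies dominated convergence on counting measure.
\item The finitely many early factors are handled separately, once they are eventually positive.
\end{itemize}

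Your route instead uses $\prod(1-y_i)\ge 1-\sum y_i$. This traps the tail $\prod_{K<i\le n}(1-\delta_n(E_i))$ in $[1-\varepsilon_K,1]$ uniformly in $n$. You then take $\liminf$ and $\limsup$ in $n$ before letting $K\to\infty$.

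Both arguments use the hypothesis in the same way: $\sup_n\delta_n(E_i)$ is a summable bound uniform in $n$. Yours is a Tannery-type argument carried out directly on the products. It needs no logarithms, no case split between a zero and a positive infinite product, and no bookkeeping about which factors are eventually nonzero. The paper's version is the more standard dominated-convergence template, but it buys nothing extra here.

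The remark in your "some $E_j$ misses $0$" paragraph about unlucky $E_j$ is superfluous. Once the indicator is eventually $1$, your sandwich covers every case.
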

\begin{proof}
(i) Every $m\in F_n$ has a unique representation
$m=p_1^{e_1}\cdots p_n^{e_n}$ with $0\le e_i\le n$. Such an $m$ belongs to
$A$ if and only if $e_i\in E_i$ for $1\le i\le n$ and $0\in E_j$ for every
$j>n$. Hence
\[
  \frac{|A\cap F_n|}{|F_n|}
  =\one_{\{\forall j>n,\ 0\in E_j\}}
   \prod_{i=1}^{n}\frac{|E_i\cap\{0,\ldots,n\}|}{n+1}
  =\one_{\{\forall j>n,\ 0\in E_j\}}
   \prod_{i=1}^{n}\bigl(1-\delta_n(E_i)\bigr).
\]
Now assume that all the densities $d_0(E_i)$ exist and
\[
  \sum_{i\ge1}\sup_n\delta_n(E_i)<\infty.
\]
Only finitely many sets $E_i$ can omit $0$, because $0\notin E_i$ implies
$\delta_0(E_i)=1$. Consequently, the indicator in the preceding formula equals
$1$ for all sufficiently large $n$.

Suppose first that $d_0(E_k)=0$ for some $k\in\N$. Since
$1-\delta_n(E_k)\to0$ and every factor lies in $[0,1]$, we have
\[
  \prod_{i=1}^{n}\bigl(1-\delta_n(E_i)\bigr)\longrightarrow0
  =\prod_{i=1}^{\infty}d_0(E_i).
\]
Suppose now that $d_0(E_i)>0$ for every $i\in\N$. Choose $i_1$ so large
that $\sup_n\delta_n(E_i)\le1/2$ for every $i>i_1$ and $0\in E_i$ for every
$i\ge i_1$. For $i>i_1$, define
\[
  f_n(i):=\one_{\{i\le n\}}\log\bigl(1-\delta_n(E_i)\bigr).
\]
For every fixed $i>i_1$,
\[
  f_n(i)\longrightarrow\log d_0(E_i),
\]
and
\[
  |f_n(i)|\le2\sup_m\delta_m(E_i).
\]
The dominating sequence is summable by hypothesis. Dominated convergence on
counting measure therefore gives
\[
  \sum_{i_1<i\le n}\log\bigl(1-\delta_n(E_i)\bigr)
  \longrightarrow
  \sum_{i>i_1}\log d_0(E_i).
\]
For each of the finitely many indices $i\le i_1$, the positivity of
$d_0(E_i)$ implies that $1-\delta_n(E_i)>0$ for all sufficiently large $n$.
Thus the corresponding logarithms are eventually well defined and
\[
  \sum_{i\le i_1}\log\bigl(1-\delta_n(E_i)\bigr)
  \longrightarrow
  \sum_{i\le i_1}\log d_0(E_i).
\]
Exponentiating yields
\[
  \md_{(F_n)_n}(A)=\prod_{i=1}^{\infty}d_0(E_i).
\]

(ii) For each $m\in\N$, define the finite-prime approximation
\[
  A^{(m)}:=\{n\in\N:\vp[p_i](n)\in E_i\text{ for }1\le i\le m\}.
\]
By Theorem~\ref{thm:finite-prime}(ii),
\[
  \dens(A^{(m)})=\prod_{i=1}^{m}\dens(A_i).
\]
Let
\[
  B_i:=\N\setminus A_i=\{n\in\N:\vp[p_i](n)\notin E_i\}.
\]
Then, in fact,
\[
  A=A^{(m)}\setminus\bigcup_{i>m}B_i,
  \qquad\text{and hence}\qquad A\subseteq A^{(m)}.
\]
We estimate the upper density of the union on the left. If $0\notin E_i$, then
Theorem~\ref{thm:one-prime}(ii) gives
\[
  1-\dens(A_i)\ge\frac{p_i-1}{p_i}\ge\frac12.
\]
The summability hypothesis therefore implies that $0\in E_i$ for all
sufficiently large $i$. For such $i$,
\[
  B_i\subseteq\bigcup_{\substack{e\ge1\\e\notin E_i}}p_i^e\N.
\]
For each $N$, we may now apply the union bound inside $[1,N]$ and use
$|p_i^e\N\cap[1,N]|/N\le p_i^{-e}$. The resulting double series is finite by
the summability hypothesis. Consequently, for all sufficiently large $m$,
\begin{align*}
  \udens\Bigl(\bigcup_{i>m}B_i\Bigr)
  &\le \sum_{i>m}\sum_{\substack{e\ge1\\e\notin E_i}}\frac1{p_i^e}\\
  &=\sum_{i>m}\frac{p_i}{p_i-1}\bigl(1-\dens(A_i)\bigr)\\
  &\le2\sum_{i>m}\bigl(1-\dens(A_i)\bigr)=:\eta_m.
\end{align*}
Here $\eta_m\to0$ by assumption. The two inclusions above now give the squeeze
\[
  \dens(A^{(m)})-\eta_m
  \le\ldens(A)\le\udens(A)\le\dens(A^{(m)}).
\]
Letting $m\to\infty$ shows that the lower and upper densities agree and that
\[
  \dens(A)
  =\lim_{m\to\infty}\dens(A^{(m)})
  =\prod_{i=1}^{\infty}\dens(A_i).
\]
\end{proof}

\begin{example}\label{ex:squarefree}
Take $E_i=\{0,1\}$, for which $\dens(A_i)=1-1/p_i^2$. Since
$\sum_p p^{-2}<\infty$, Theorem~\ref{thm:infinite-prime}(ii) applies and gives
\[
  \dens(A)=\prod_{p}\Bigl(1-\frac{1}{p^{2}}\Bigr)=\frac{1}{\zeta(2)},
\]
recovering the density of the squarefree integers. Another application is with
sets defined by requiring each prime valuation to belong to a local set $E_i$
with summably small complement.
\end{example}

We now turn to a probabilistic model in which the local conditions at all primes
are chosen at random, extending Proposition~\ref{prop:random}.

\begin{remark}\label{rem:B-free}
Let
\[
  \mathscr B=\{p_i^{r_i}:i\ge1\},
\]
where the $p_i$ are distinct primes and $r_i\in\N$. The set of
$\mathscr B$-free integers can then be written as
\[
  \mathcal F_{\mathscr B}
  :=\{n\in\N:b\nmid n\text{ for every }b\in\mathscr B\}
  =\bigcap_{i\ge1}
    \{n\in\N:v_{p_i}(n)\in\{0,\dots,r_i-1\}\}.
\]
Consequently, if
\[
  \sum_{i\ge1}\frac{1}{p_i^{r_i}}<\infty,
\]
Theorem~\ref{thm:infinite-prime}(ii) recovers the classical formula
\[
  \dens(\mathcal F_{\mathscr B})
  =\prod_{i\ge1}\left(1-\frac{1}{p_i^{r_i}}\right).
\]
The choice $r_i=2$ for every $i$ gives the squarefree integers considered
in Example~\ref{ex:squarefree}.

The usual $\mathscr B$-free subshift is generated by
$\one_{\mathcal F_{\mathscr B}}$ under the additive shift, and its invariant
measures have been studied extensively; see, for example,
\cite{KulagaLemanczykWeiss}. By contrast, the valuation systems considered
here encode multiplication through translations in the exponent
coordinates. Thus the two constructions associate different dynamical
systems with the same arithmetic set.
\end{remark}

\begin{proposition}\label{prop:random-infinite}
Let $(X_{i,e})_{i\ge1,\,e\ge0}$ be independent Bernoulli random variables with
$\mathbb{P}(X_{i,e}=1)=\theta_i\in (0,1]$. Put
\[
  E_i:=\{e:X_{i,e}=1\},\qquad
  A_i:=\{n\in\N:\vp[p_i](n)\in E_i\},
\]
and define
\[
  A_\omega:=\bigcap_{i=1}^{\infty}A_i
  =\{n\in\N:\vp[p_i](n)\in E_i\text{ for every }i\ge1\}.
\]
\begin{enumerate}[label=\textup{(\roman*)}]
\item If $\sum_i(1-\theta_i)<\infty$, then almost surely
\[
  \sum_{i=1}^{\infty}\bigl(1-\dens(A_i)\bigr)<\infty,
\]
the natural density $\dens(A_\omega)$ exists and equals
$\prod_i\dens(A_i)>0$, and
\[
  \mathbb{E}\bigl[\dens(A_\omega)\bigr]=\prod_{i=1}^{\infty}\theta_i> 0.
\]
\item If $\sum_{i}(1-\theta_i)=\infty$, then almost surely $\dens(A_\omega)=0$.
\end{enumerate}
\end{proposition}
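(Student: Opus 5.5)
Both parts reduce to Theorem~\ref{thm:infinite-prime}(ii) and Theorem~\ref{thm:one-prime}(ii), applied pathwise. The random quantity to control is
\[
  Y_i:=1-\dens(A_i)=\sum_{e\ge0}(1-X_{i,e})\frac{p_i-1}{p_i^{e+1}},
\]
which Theorem~\ref{thm:one-prime}(ii) gives for every $\omega$. By Tonelli, $\mathbb{E}[Y_i]=1-\theta_i$, since the weights $(p_i-1)/p_i^{e+1}$ sum to $1$. The $Y_i$ take values in $[0,1]$ and are independent across $i$, because they are functions of disjoint families of the $X_{i,e}$.

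For (i), assume $\sum_i(1-\theta_i)<\infty$. Then $\mathbb{E}\sum_iY_i<\infty$ by monotone convergence, so $\sum_iY_i<\infty$ almost surely. On this event, Theorem~\ref{thm:infinite-prime}(ii) shows that $\dens(A_\omega)$ exists and equals $\prod_i\dens(A_i)$.

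To see that this product is positive, note that each $\dens(A_i)>0$ almost surely: $\dens(A_i)=0$ would force $X_{i,e}=0$ for all $e$, which has probability $\prod_e(1-\theta_i)=0$ because $\theta_i>0$. An infinite product of factors in $(0,1]$ is positive whenever $\sum_i(1-\text{factor})<\infty$.

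For the expectation, I would use independence across $i$ to get $\mathbb{E}\bigl[\prod_{i\le m}\dens(A_i)\bigr]=\prod_{i\le m}\theta_i$ for each finite $m$. The partial products decrease to $\prod_i\dens(A_i)$ and are bounded by $1$, so monotone (or dominated) convergence gives $\mathbb{E}[\dens(A_\omega)]=\prod_i\theta_i$. This is positive because $\sum_i(1-\theta_i)<\infty$ and every $\theta_i>0$.

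For (ii), I would not try to show existence via Theorem~\ref{thm:infinite-prime}(ii); instead I bound the upper density directly. For every $m$, $A_\omega\subseteq A^{(m)}$, the finite-prime set from that proof. By Theorem~\ref{thm:finite-prime}(ii),
\[
  \udens(A_\omega)\le\prod_{i\le m}\dens(A_i)=\prod_{i\le m}(1-Y_i)\le\exp\Bigl(-\sum_{i\le m}Y_i\Bigr).
\]
It therefore suffices to show $\sum_iY_i=\infty$ almost surely, since then $\udens(A_\omega)=0$ and the density exists and equals $0$.

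This divergence is the main obstacle. I would not attempt a second-moment argument. Instead I would use the bound $Y_i\ge(1-X_{i,0})(p_i-1)/p_i\ge\tfrac12(1-X_{i,0})$. The events $\{X_{i,0}=0\}$ are independent with probabilities $1-\theta_i$ summing to infinity, so by the second Borel--Cantelli lemma infinitely many occur almost surely. Each occurrence adds at least $\tfrac12$ to $\sum_iY_i$, so the series diverges almost surely, which completes (ii).
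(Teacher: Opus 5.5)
Your proposal is correct. Part (i) and the reduction of (ii) follow the paper. In (i) the paper also uses Tonelli for $\mathbb{E}\sum_i Y_i<\infty$, then $\mathbb{P}(E_i=\varnothing)=0$ for positivity, then dominated convergence with independence for the expectation. For positivity you should say explicitly that the countably many null events $\{E_i=\varnothing\}$ are discarded at once. In (ii) the paper likewise reduces to showing $\sum_i Y_i=\infty$ almost surely, via $A_\omega\subseteq A^{(m)}$ and $\prod_{i\le m}(1-Y_i)\le e^{-\sum_{i\le m}Y_i}$.

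The genuine difference is how you prove that divergence. The paper uses a second-moment argument. With $S_n=\sum_{i\le n}\beta_i$ and $a_n=\mathbb{E}[S_n]$, independence and $0\le\beta_i\le1$ give $\operatorname{Var}(S_n)\le a_n$. Chebyshev along a subsequence with $a_{n_k}\ge k^2$, together with the first Borel--Cantelli lemma, then gives $S_{n_k}\ge a_{n_k}/2$ eventually.

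You instead observe that $Y_i\ge\frac12\one_{\{X_{i,0}=0\}}$, because the $e=0$ weight $(p_i-1)/p_i$ is at least $\frac12$. These events are independent, with probabilities $1-\theta_i$, which is exactly $\mathbb{E}[Y_i]$, so the second Borel--Cantelli lemma finishes.

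Your route is shorter and more elementary. It works precisely because the Bernoulli parameter is uniform in $e$, so the zeroth coordinate alone already carries a divergent sum of failure probabilities. The paper's argument is more robust. It uses only independence, boundedness in $[0,1]$, and $\sum_i\mathbb{E}[\beta_i]=\infty$. It would therefore survive variants where the parameters depend on $e$ as well as on $i$, in which $\sum_i\mathbb{P}(X_{i,0}=0)$ may converge while $\sum_i\mathbb{E}[\beta_i]$ diverges.
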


\begin{proof}
For each $i$, let
\[
  \beta_i:=\sum_{e\notin E_i}\frac{p_i-1}{p_i^{e+1}}
  =1-\dens(A_i).
\]
Then $0\le\beta_i\le1$,
\[
  \mathbb{E}[\beta_i]
  =(1-\theta_i)\sum_{e\ge0}\frac{p_i-1}{p_i^{e+1}}
  =1-\theta_i,
\]
and the random variables $\beta_i$ are independent because they depend on
disjoint families of the $X_{i,e}$.

(i) Since
\[
  \mathbb{E}\Bigl[\sum_i\beta_i\Bigr]=\sum_i(1-\theta_i)<\infty,
\]
Tonelli's theorem gives
$\mathbb E[\sum_i\beta_i]<\infty$; since the sum is nonnegative, it follows
that $\sum_i\beta_i<\infty$ almost surely. We also have
\[
  \mathbb{P}(\beta_i=1)
  =\mathbb{P}(E_i=\varnothing)
  =\lim_{M\to\infty}(1-\theta_i)^{M+1}=0.
\]
Thus, outside a single null set, $\beta_i<1$ for every $i$. On this event the
finite initial factors $1-\beta_i$ are positive, while for all sufficiently
large $i$ one has $\beta_i\le1/2$ and
\[
  |\log(1-\beta_i)|\le2\beta_i.
\]
Hence $\prod_i(1-\beta_i)>0$. Theorem~\ref{thm:infinite-prime} now yields
\[
  \dens(A_\omega)=\prod_i\gamma_i>0,
  \qquad \gamma_i:=1-\beta_i=\dens(A_i).
\]
The partial products $\prod_{i\le m}\gamma_i$ decrease to
$\dens(A_\omega)$ and lie in $[0,1]$. Moreover,
$\mathbb E[\gamma_i]=1-\mathbb E[\beta_i]=\theta_i$. Therefore, by dominated
convergence and independence,
\[
  \mathbb{E}\bigl[\dens(A_\omega)\bigr]
  =\lim_{m\to\infty}\mathbb{E}\Bigl[\prod_{i\le m}\gamma_i\Bigr]
  =\lim_{m\to\infty}\prod_{i\le m}\mathbb{E}[\gamma_i]
  =\prod_{i=1}^{\infty}\theta_i.
\]
This product is positive because every $\theta_i>0$ and, for all sufficiently
large $i$, the estimate $-\log\theta_i\le2(1-\theta_i)$ makes
$\sum_i|\log\theta_i|$ finite.

(ii) Suppose that $\sum_i(1-\theta_i)=\infty$. Let
\[
  S_n:=\sum_{i=1}^n\beta_i
  \qquad\text{and}\qquad
  a_n:=\mathbb{E}[S_n]=\sum_{i=1}^n(1-\theta_i).
\]
Then $a_n\to\infty$, and independence together with $0\le\beta_i\le1$ gives
\[
  \operatorname{Var}(S_n)
  =\sum_{i=1}^n\operatorname{Var}(\beta_i)
  \le\sum_{i=1}^n\mathbb{E}[\beta_i^2]
  \le a_n.
\]
Choose $n_k$ so that $a_{n_k}\ge k^2$. By Chebyshev's inequality,
\[
  \mathbb{P}\left(S_{n_k}<\frac{a_{n_k}}2\right)
  \le\frac{4\operatorname{Var}(S_{n_k})}{a_{n_k}^2}
  \le\frac4{k^2}.
\]
The Borel--Cantelli lemma implies that $S_{n_k}\ge a_{n_k}/2$ eventually.
Since $a_{n_k}\to\infty$ and the partial sums $S_n$ are nondecreasing, it
follows that $\sum_i\beta_i=\infty$ almost surely. Consequently, almost surely,
\[
  \prod_{i=1}^{m}(1-\beta_i)
  \le \exp\Bigl(-\sum_{i=1}^{m}\beta_i\Bigr)
  =e^{-S_m}\longrightarrow0.
\]
Define the finite approximants
\[
  A_\omega^{(m)}:=\bigcap_{i=1}^{m}A_i.
\]
Since $A_\omega\subseteq A_\omega^{(m)}$ and
\[
  \dens(A_\omega^{(m)})=\prod_{i\le m}(1-\beta_i)\longrightarrow0,
\]
we obtain
\[
  \udens(A_\omega)\le\inf_m\dens(A_\omega^{(m)})=0.
\]
Therefore $\dens(A_\omega)=0$ almost surely.
\end{proof}

Thus, under the standing assumption $\theta_i>0$ for every $i$, the random set
has positive natural density almost surely when
$\sum_i(1-\theta_i)<\infty$, and has natural density zero almost surely when
this series diverges.

\section{Symbolic correspondence}\label{sec:symbolic}

In this section we explore some consequences of the valuation dictionary from
the point of view of the Furstenberg correspondence principle. We follow the
standard symbolic construction, as presented for instance
in~\cite[Chapter 7]{EinsiedlerWard}.

Let $X_0=\{0,1\}^{\Z}$ be the full shift on two symbols, equipped with the
compact product topology induced by the discrete topology on $\{0,1\}$. We use
the convention $(\sigma x)(n)=x(n+1)$. Given $E\subseteq\Nz$, define a point
$x_E\in\{0,1\}^{\Z}$ by
\begin{equation}\label{eq:def-xE}
x_E(n):=
  \begin{cases}
    1,& n\in E,\\
    0,& n\notin E,
  \end{cases}\quad(n\ge 0),
  \qquad\text{and}\qquad x_E(n):=0\quad(n<0).
\end{equation}
We regard $\one_E$ as a function on $\Z$ by setting $\one_E(n)=0$ for $n<0$;
with this convention, $x_E(n)=\one_E(n)$ for every $n\in\Z$.

Let
\[
  X_E:=\overline{\{\sigma^{m}x_E:m\in\Z\}}\subseteq\{0,1\}^{\Z},
\]
where the closure is taken in the product topology. Then $X_E$ is invariant
under both $\sigma$ and $\sigma^{-1}$. From now on, $\sigma$ denotes the
homeomorphism obtained by restricting the shift to $X_E$.
We use the exponent-shift notation $E-t$ introduced before
Theorem~\ref{thm:one-prime}.

\subsection{One-prime case}

In this subsection, we fix a prime $p$ and use the same notation as in
Section~\ref{sec:valuation}. Throughout this subsection, every multiplicative
density is taken with respect to $\md_{(F_n)_n}$, where $(F_n)_n$ is the
prime-box F{\o}lner sequence.

The first observation is that, under this dictionary, additive recurrence in the
exponent set is translated into multiplicative recurrence for the corresponding
valuation set. However, under an additional hypothesis, we can explicitly
describe the system arising from Furstenberg's correspondence principle.
\begin{theorem}\label{thm:correspondence}
Assume that for every finite set $T\subseteq\Z$ the asymptotic density
$d_0\bigl(\bigcap_{t\in T}(E-t)\bigr)$ exists. Then there exists a
$\sigma$-invariant Borel probability measure $\mu_E$ on $X_E$ such that, with
\[
  B:=\{x\in X_E:x(0)=1\},
\]
for every finite set $T\subseteq\Z$,
\[
  \mu_E\Bigl(\bigcap_{t\in T}\sigma^{-t}B\Bigr)=d_0\Bigl(\bigcap_{t\in T}(E-t)\Bigr).
\]
Moreover, if $A_E=\{n\in\N:\vp(n)\in E\}$ and $a_1,\dots,a_\ell\in\N$, then
\[
  \md\bigl(A_E\cap A_E/a_1\cap\cdots\cap A_E/a_\ell\bigr)
  =\mu_E\bigl(B\cap\sigma^{-\vp(a_1)}B\cap\cdots\cap\sigma^{-\vp(a_\ell)}B\bigr).
\]
\end{theorem}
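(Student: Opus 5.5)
We build $\mu_E$ as a weak-$*$ limit of empirical measures along the orbit of $x_E$, following the standard correspondence construction. For $N\ge0$ put
\[
  \mu_N:=\frac1{N+1}\sum_{m=0}^{N}\delta_{\sigma^{m}x_E},
\]
a Borel probability measure on $X_E$. Since $X_E$ is compact metrizable, some subsequence $\mu_{N_k}$ converges weak-$*$ to a probability measure $\mu_E$. Invariance is the usual telescoping estimate: for continuous $f$,
\[
  \Bigl|\int f\circ\sigma\,d\mu_N-\int f\,d\mu_N\Bigr|\le\frac{2\|f\|_\infty}{N+1},
\]
and this passes to the limit.

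Next we identify the cylinder values. For a finite $T\subseteq\Z$, the set $C_T:=\bigcap_{t\in T}\sigma^{-t}B=\{x\in X_E:x(t)=1\ \forall t\in T\}$ is clopen, so $\one_{C_T}$ is continuous and $\mu_{N_k}(C_T)\to\mu_E(C_T)$. On the other hand,
\[
  \sigma^{m}x_E\in C_T\iff x_E(m+t)=1\ \forall t\in T\iff m\in\bigcap_{t\in T}(E-t)
\]
for $m\ge0$, using the convention $x_E(n)=0$ for $n<0$ and the definition of $E-t$ as a subset of $\Nz$. Hence
\[
  \mu_N(C_T)=\frac{\bigl|\bigcap_{t\in T}(E-t)\cap\{0,\dots,N\}\bigr|}{N+1}.
\]
By hypothesis this converges along the full sequence to $d_0\bigl(\bigcap_{t\in T}(E-t)\bigr)$, so the subsequential limit is the same and we get the first formula. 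For $T=\varnothing$ both sides equal $1$, by the stated convention.

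The only delicate point is the negative shifts. If some $t\in T$ is negative, then $m+t<0$ for small $m$ forces $x_E(m+t)=0$. This agrees with the definition of $E-t$, which only contains $e\ge0$ with $e+t\in E\subseteq\Nz$, and the finitely many small $m$ do not affect the density anyway. We also need $\mu_E$ to be supported on $X_E$ rather than just on $\{0,1\}^\Z$. This holds because every $\mu_N$ lives on the closed set $X_E$.

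For the second formula, we apply Proposition~\ref{prop:correlations} with $k=1$, $p_1=p$, $E_1=E$, $t_{0,1}=0$ and $t_{j,1}=\vp(a_j)$. Its hypothesis, existence of $d_0\bigl(\bigcap_{j=0}^{\ell}(E-t_{j,1})\bigr)$, is the assumption with $T=\{0,\vp(a_1),\dots,\vp(a_\ell)\}$. It yields
\[
  \md\bigl(A_E\cap A_E/a_1\cap\cdots\cap A_E/a_\ell\bigr)=d_0\Bigl(\bigcap_{t\in T}(E-t)\Bigr),
\]
which equals $\mu_E(C_T)=\mu_E\bigl(B\cap\sigma^{-\vp(a_1)}B\cap\cdots\cap\sigma^{-\vp(a_\ell)}B\bigr)$ by the first part. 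Repeated values of $\vp(a_j)$ are harmless, since $T$ is a set.

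No serious obstacle arises. The step needing care is the cylinder computation, in particular the sign convention $(\sigma x)(n)=x(n+1)$, which gives $\sigma^{-t}B=\{x:x(t)=1\}$, together with the boundary handling at negative indices.
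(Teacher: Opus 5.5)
Your proof is correct, but it obtains $\mu_E$ differently from the paper. You take an arbitrary subsequential weak-$*$ limit of the empirical measures and evaluate it only on the positive cylinders $C_T=\bigcap_{t\in T}\sigma^{-t}B$. The identity $\one_{C_T}(\sigma^m x_E)=\one_{\bigcap_{t\in T}(E-t)}(m)$ holds exactly for every $m\ge 0$, so the empirical frequencies of $C_T$ are precisely the averages defining $d_0$. These converge along the full sequence by hypothesis, so the choice of subsequence plays no role. The paper instead proves that the whole sequence $(\mu_N)_N$ converges. It expands each fully specified cylinder on $\{-M,\dots,M\}$ by inclusion--exclusion into an alternating sum of densities $d_0\bigl(\bigcap_{n\in P\cup R}(E-n)\bigr)$, and then uses the uniform density of cylinder functions. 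Your route is shorter and suffices for the statement, which only asserts existence of $\mu_E$. The paper's route additionally shows that $x_E$ is generic for $\mu_E$, which makes $\mu_E$ canonical. The paper uses this afterwards, for instance in the remark that $\mu_E$ coincides with $\mu$ whenever $x_E$ is generic for $\mu$, and when it refers to \emph{the} system constructed in Theorem~\ref{thm:correspondence}. You can recover this with one more line. By inclusion--exclusion, the values on the sets $C_T$ determine any Borel probability measure on all cylinders. Cylinders form a $\pi$-system generating the Borel $\sigma$-algebra of $X_E$, so every subsequential limit equals $\mu_E$, and by compactness the full sequence converges.
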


\begin{proof}
For $N\in\N$, define the measure
\[
\mu_N:=\frac{1}{N}\sum_{m=0}^{N-1}\delta_{\sigma^{m}x_E},
\]
where $\delta_y$ denotes the Dirac mass at $y$. We first show that the full
sequence $(\mu_N)_N$ converges. We know that any accumulation point of this sequence is $\sigma$-invariant (\cite[Theorem 4.1]{EinsiedlerWard}). Functions depending on only finitely many coordinates, namely, cylinder functions, are uniformly dense in $C(X_E)$. It is
therefore enough to prove convergence on cylinder indicators, so the Riesz representation theorem gives a unique Borel probability measure $\mu_E$ and
$\mu_N\to\mu_E$ weak-$*$.

Fix $M\in\N$ and finite disjoint sets $P,Q\subseteq\Z$ with
$P\cup Q=\{-M,-M+1,\dots,M\}$.
Consider
\[
  C(P,Q):=\{x\in X_E:x(n)=1\ \forall n\in P,\ x(n)=0\ \forall n\in Q\}.
\]
For $m\ge M$, all integers $m+n$ with $n\in P\cup Q$ are nonnegative, so
$\sigma^{m}x_E\in C(P,Q)$ is equivalent to the conjunction of the conditions
$m+n\in E$ ($n\in P$) and $m+n\notin E$ ($n\in Q$). Thus, for $m\ge M$,
\[
  \one_{C(P,Q)}(\sigma^{m}x_E)
  =\prod_{n\in P}\one_E(m+n)\prod_{n\in Q}\bigl(1-\one_E(m+n)\bigr).
\]
Expanding the second product and using that $P$ and $Q$ are disjoint (hence so
are $P$ and any $R\subseteq Q$), we obtain
\[
  \one_{C(P,Q)}(\sigma^{m}x_E)
  =\sum_{R\subseteq Q}(-1)^{|R|}\prod_{n\in P\cup R}\one_E(m+n).
\]
Hence
\[
  \frac{1}{N}\sum_{m=M}^{N-1}\one_{C(P,Q)}(\sigma^{m}x_E)
  =\sum_{R\subseteq Q}(-1)^{|R|}\frac{1}{N}\sum_{m=M}^{N-1}\prod_{n\in P\cup R}\one_E(m+n).
\]
Note that
\[
  \frac{1}{N}\sum_{m=M}^{N-1}\prod_{n\in P\cup R}\one_E(m+n)
  =\frac{\bigl|\bigcap_{n\in P\cup R}(E-n)\cap[M,N-1]\bigr|}{N}.
\]
Since $M$ is fixed, the density
$d_0\bigl(\bigcap_{n\in P\cup R}(E-n)\bigr)$ exists by assumption. With the
index change $N=L+1$, its defining average is over $[0,N-1]$ with denominator
$N$; deleting the finitely many terms $0\le m<M$ does not affect the limit.
Therefore
\[
  \lim_{N\to\infty}\frac{\bigl|\bigcap_{n\in P\cup R}(E-n)\cap[M,N-1]\bigr|}{N}
  =d_0\Bigl(\bigcap_{n\in P\cup R}(E-n)\Bigr).
\]
Thus
\[
  \lim_{N\to\infty}\frac{1}{N}\sum_{m=0}^{N-1}\one_{C(P,Q)}(\sigma^{m}x_E)
  =\sum_{R\subseteq Q}(-1)^{|R|}d_0\Bigl(\bigcap_{n\in P\cup R}(E-n)\Bigr).
\]
The cylinders $C(P,Q)$ considered above specify every coordinate in a finite
interval. Any cylinder supported on $\{-M,\dots,M\}$ but leaving some coordinates
unspecified is a finite disjoint union of such fully specified cylinders.
Therefore $\mu_N(C)$ converges for every cylinder $C$, and the density argument
at the beginning of the proof yields $\mu_N\to\mu_E$ weak-$*$. In
particular, for finite $T\subseteq\Z$,
\[
  \mu_E\Bigl(\bigcap_{t\in T}\sigma^{-t}B\Bigr)
  =\lim_{N\to\infty}\frac{1}{N}\sum_{m=0}^{N-1}\prod_{t\in T}\one_E(m+t)
  =d_0\Bigl(\bigcap_{t\in T}(E-t)\Bigr).
\]
Finally, if $A_E=\{n:\vp(n)\in E\}$ and $a_1,\dots,a_\ell\in\N$, then by
Proposition~\ref{prop:correlations} in the one-prime case,
\[
  \md\bigl(A_E\cap A_E/a_1\cap\cdots\cap A_E/a_\ell\bigr)
  =d_0\Bigl(E\cap\bigcap_{i=1}^{\ell}(E-\vp(a_i))\Bigr).
\]
On the other hand, applying the first part of the theorem with
$T=\{0,\vp(a_1),\dots,\vp(a_\ell)\}$ gives
\[
  \mu_E\bigl(B\cap\sigma^{-\vp(a_1)}B\cap\cdots\cap\sigma^{-\vp(a_\ell)}B\bigr)
  =d_0\Bigl(E\cap\bigcap_{i=1}^{\ell}(E-\vp(a_i))\Bigr).
\]
This proves the last claim.
\end{proof}

The theorem above may be viewed as a correspondence principle: multiplicative
correlations on $\N$ are represented exactly as ergodic correlations on a
symbolic dynamical system. In contrast with affine approaches that mix addition
and multiplication on the ambient space, the valuation framework transfers
multiplicative structure to additive dynamics on exponent coordinates, where
correlations become explicitly computable.

Assume only that $d_0(E)$ exists, without assuming the existence of the
higher correlation densities. Compactness of the space of probability measures
on $X_E$ gives a subsequence $(N_j)_j$ for which
$\mu_{N_j}\to\mu$ weak-$*$. The same telescoping estimate used above shows that
$\mu$ is $\sigma$-invariant, and
\[
  \mu(B)=\lim_{j\to\infty}\mu_{N_j}(B)=d_0(E).
\]
For $a_1,\dots,a_\ell\in\N$, put
$T=\{0,\vp(a_1),\dots,\vp(a_\ell)\}$. Then
\[
\begin{aligned}
  \mu\Bigl(\bigcap_{t\in T}\sigma^{-t}B\Bigr)
  &=\lim_{j\to\infty}\frac1{N_j}
    \sum_{m=0}^{N_j-1}\prod_{t\in T}\one_E(m+t)\\
  &\le \limsup_{N\to\infty}\frac1N
    \sum_{m=0}^{N-1}\prod_{t\in T}\one_E(m+t)\\
  &=\umd_{(F_n)_n}
    \bigl(A_E\cap A_E/a_1\cap\cdots\cap A_E/a_\ell\bigr),
\end{aligned}
\]
where the last equality follows from the exact finite-box identity in
Theorem~\ref{thm:one-prime}(i), applied to
$E\cap\bigcap_{i=1}^{\ell}(E-\vp(a_i))$ and with the index correspondence
$N=n+1$. Thus the usual subsequential
correspondence principle yields a one-sided upper-density estimate; see, for
example, \cite{EinsiedlerWard,BergelsonMoreira,BergelsonMoreiraAffine}. Under
the hypotheses of Theorem~\ref{thm:correspondence}, the full empirical sequence
converges, every relevant multiplicative correlation density exists, and the
inequality becomes an exact identity. This permits asymptotic properties of
both $A_E$ and $E$ to be read from dynamical properties of
$(X_E,\mu_E,\sigma)$.

The hypothesis on $E$ may seem strong, since it requires the existence of
densities for all finite intersections of translates of $E$. The next
proposition shows that this requirement is automatically satisfied whenever
$x_E$ is generic for an invariant measure.

\begin{proposition}\label{prop:generic-implies-correlations}
Let $E\subseteq\Nz$, and let $\mu$ be a $\sigma$-invariant Borel probability
measure on $X_E$. If $x_E$ is generic for $(X_E,\mu,\sigma)$, then for every
finite set $T\subseteq\Z$, the asymptotic density
\[
d_0\left(\bigcap_{t\in T}(E-t)\right)
\]
exists and equals
\[
\mu\left(\bigcap_{t\in T}\sigma^{-t}B\right),
\]
where $B:=\{x\in X_E:x(0)=1\}$.
\end{proposition}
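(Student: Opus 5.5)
Genericity of $x_E$ for $(X_E,\mu,\sigma)$ means that
\[
  \frac1N\sum_{m=0}^{N-1}f(\sigma^m x_E)\longrightarrow\int f\,d\mu
  \qquad (N\to\infty)
\]
for every $f\in C(X_E)$. We will apply this to a single continuous function and then identify the resulting ergodic average with the counting ratio that defines $d_0$.

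Fix a finite set $T\subseteq\Z$ and set
\[
  C_T:=\bigcap_{t\in T}\sigma^{-t}B=\{x\in X_E:x(t)=1\ \forall t\in T\}.
\]
This is a cylinder set determined by finitely many coordinates, so it is clopen in the product topology. Hence $\one_{C_T}$ is continuous on $X_E$. Genericity then gives
\[
  \frac1N\sum_{m=0}^{N-1}\one_{C_T}(\sigma^m x_E)\longrightarrow\mu(C_T).
\]
No boundary or portmanteau argument is needed, because the boundary of a clopen set is empty.

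Next we compute the summand. With the convention $(\sigma x)(n)=x(n+1)$,
\[
  \one_{C_T}(\sigma^m x_E)=\prod_{t\in T}x_E(m+t)=\prod_{t\in T}\one_E(m+t).
\]
Let $M:=\max(0,-\min T)$. For $m\ge M$, every index $m+t$ is nonnegative. For such $m$, the product equals $1$ exactly when $m\in\bigcap_{t\in T}(E-t)$, where $E-t=\{e\in\Nz:e+t\in E\}$ as defined before Theorem~\ref{thm:one-prime}. The finitely many terms with $m<M$ contribute at most $M/N\to0$. The set $\bigcap_{t\in T}(E-t)\cap[0,M)$ has at most $M$ elements, so it also contributes $O(M/N)$ to the counting ratio. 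Therefore
\[
  \frac{\bigl|\bigcap_{t\in T}(E-t)\cap\{0,\dots,N-1\}\bigr|}{N}
  =\frac1N\sum_{m=0}^{N-1}\one_{C_T}(\sigma^m x_E)+O(M/N).
\]
By the first step, the right-hand side converges to $\mu(C_T)$. Writing $N=n+1$, this says exactly that $d_0\bigl(\bigcap_{t\in T}(E-t)\bigr)$ exists and equals $\mu(C_T)$, which is the claim.

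The only step that needs real care is the identification in the previous paragraph, for shifts with $t<0$. There one has to check that the convention $x_E(n)=0$ for $n<0$ matches the definition of $E-t$ as a subset of $\Nz$, and that the discrepancy is confined to finitely many initial $m$. Everything else follows directly from the definition of genericity and the fact that cylinder indicators are continuous.
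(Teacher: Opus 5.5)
Your proof is correct and follows essentially the same route as the paper: you apply genericity to the continuous indicator of the clopen cylinder $C_T$, then identify the ergodic average with the counting ratio for $\bigcap_{t\in T}(E-t)$. The $O(M/N)$ bookkeeping for initial $m$ is harmless but unnecessary, since the conventions $\one_E(n)=0$ for $n<0$ and $E-t\subseteq\Nz$ make $\one_{C_T}(\sigma^m x_E)=\one_{\bigcap_{t\in T}(E-t)}(m)$ hold exactly for every $m\ge0$, which is how the paper argues.
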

\begin{proof}
Fix a finite set $T\subseteq\Z$ and let
\[
  C_T:=\bigcap_{t\in T}\sigma^{-t}B.
\]
This is a cylinder set, so $\one_{C_T}$ is continuous. By the convention
following~\eqref{eq:def-xE}, for every $m\ge0$,
\[
  \one_{C_T}(\sigma^m x_E)
  =\prod_{t\in T}\one_E(m+t).
\]
Consequently,
\[
\begin{aligned}
  \frac1N\sum_{m=0}^{N-1}\one_{C_T}(\sigma^m x_E)
  &=\frac1N\sum_{m=0}^{N-1}\prod_{t\in T}\one_E(m+t)\\
  &=\frac{\left|\bigcap_{t\in T}(E-t)\cap[0,N-1]\right|}{N}.
\end{aligned}
\]
Since $x_E$ is generic for $\mu$, the left-hand side converges to
$\int\one_{C_T}\,d\mu=\mu(C_T)$. Hence the density on the right exists and
\[
  d_0\Bigl(\bigcap_{t\in T}(E-t)\Bigr)
  =\mu\Bigl(\bigcap_{t\in T}\sigma^{-t}B\Bigr).
\]
\end{proof}

By construction, if $x_E$ is generic for an invariant measure $\mu$, then the
measure $\mu_E$ obtained above coincides with $\mu$. More generally, start with
a generic point $y\in\{0,1\}^{\Z}$ and define
$E:=\{n\ge0:y(n)=1\}$. The points $x_E$ and $y$ agree on all nonnegative
coordinates. Hence, for a cylinder function supported on $\{-M,\dots,M\}$, the two
forward orbit sequences agree from time $M$ onward; their averages therefore
have the same limit. Since cylinder functions are uniformly dense, $x_E$ is
generic for the same measure. In
particular, generic points for weakly mixing non-mixing symbolic systems (\cite{Chaconmap}) or for
mixing symbolic systems produce valuation sets with the corresponding
correlation behavior. If $E$ is periodic on $\Nz$, its periodic bi-infinite extension
appears as a limit of forward shifts of $x_E$; the initial boundary at $0$ has
zero frequency. Thus $\mu_E$ is supported on the corresponding finite
$\sigma$-cycle, and the associated measure-preserving system is finite cyclic.

For every finite set $S\subseteq\Z$, write
\[
  c(S):=d_0\Bigl(\bigcap_{s\in S}(E-s)\Bigr).
\]

\begin{proposition}\label{prop:ergodic-characterization}
Let $E\subseteq\Nz$ and assume that, for every finite set $T\subseteq\Z$, the
asymptotic density $d_0\bigl(\bigcap_{t\in T}(E-t)\bigr)$ exists. Then the
system $(X_E,\mu_E,\sigma)$ constructed in
Theorem~\ref{thm:correspondence} is:
\begin{enumerate}[label=\textup{(\roman*)}]
\item ergodic if and only if, for all finite $S,S'\subseteq\Z$,
\[
  \lim_{N\to\infty}\frac1N\sum_{n=0}^{N-1}
  c\bigl(S\cup(S'+n)\bigr)=c(S)c(S');
\]
\item weakly mixing if and only if, for all finite $S,S'\subseteq\Z$,
\[
  \lim_{N\to\infty}\frac1N\sum_{n=0}^{N-1}
  \left|c\bigl(S\cup(S'+n)\bigr)-c(S)c(S')\right|=0;
\]
\item mixing if and only if, for all finite $S,S'\subseteq\Z$,
\[
  \lim_{n\to\infty}c\bigl(S\cup(S'+n)\bigr)=c(S)c(S').
\]
\end{enumerate}
\end{proposition}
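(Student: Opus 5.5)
The plan is to translate the quantities $c(\cdot)$ into measures of cylinder sets and then reduce each property to the standard criteria on a generating algebra. By Theorem~\ref{thm:correspondence}, for every finite $S\subseteq\Z$ we have $c(S)=\mu_E(C_S)$, where $C_S:=\bigcap_{s\in S}\sigma^{-s}B$. Since $\sigma^{-n}C_{S'}=\bigcap_{s\in S'}\sigma^{-(s+n)}B=C_{S'+n}$, we get
\[
  c\bigl(S\cup(S'+n)\bigr)=\mu_E\bigl(C_S\cap\sigma^{-n}C_{S'}\bigr).
\]
Hence the three displayed conditions say exactly that the pair $(C_S,C_{S'})$ satisfies, respectively, the Cesàro, the absolute-Cesàro, and the plain convergence of $\mu_E(C_S\cap\sigma^{-n}C_{S'})$ to $\mu_E(C_S)\mu_E(C_{S'})$. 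The forward implications are then immediate from the standard characterizations of ergodicity, weak mixing and mixing applied to the particular sets $C_S,C_{S'}$, for instance via the mean ergodic theorem in the ergodic case.

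For the converses I would use the standard fact that it suffices to verify each of these three convergence statements for all pairs of sets in a family $\mathcal P$ whose linear span is dense in $L^2(\mu_E)$, or more generally in $L^1(\mu_E)$. The argument is that each statement is bilinear in $(\one_{C},\one_{C'})$, and the averaged correlations are uniformly Lipschitz in $L^2$, since $|\langle f\circ\sigma^n,g\rangle|\le\|f\|_2\|g\|_2$. Therefore the statements extend from $\mathcal P$ first to finite linear combinations and then to the closure, that is, to all $L^2$ functions, and in particular to all measurable sets. For weak mixing this works because absolute values of finite linear combinations are bounded by sums of absolute values. The ergodic case can alternatively be phrased through the von Neumann limit being the projection onto invariant functions.

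The main step is showing that the span of $\{\one_{C_S}\}$ is dense. The coordinate algebra generated by the cylinders is dense in $L^2(\mu_E)$, since it generates the Borel $\sigma$-algebra of $X_E\subseteq\{0,1\}^\Z$. Every cylinder $C(P,Q)$ equals a signed sum of sets $C_{P\cup R}$ by the same inclusion--exclusion identity $\one_{C(P,Q)}=\sum_{R\subseteq Q}(-1)^{|R|}\one_{C_{P\cup R}}$ used in the proof of Theorem~\ref{thm:correspondence}. Here $C_\varnothing=X_E$ corresponds to $c(\varnothing)=1$, which is consistent with the paper's convention for empty intersections. Thus the span of the $\one_{C_S}$ contains all cylinder indicators, which gives the required density. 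I expect the only delicate point to be the approximation argument in the weak mixing case, where one must check that the absolute-value Cesàro condition passes to linear combinations and limits. This follows from the triangle inequality together with the uniform bound $\|f\|_2\|g\|_2$ on each term.
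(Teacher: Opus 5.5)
Your proposal is correct and follows the same route as the paper. You identify $c(S\cup(S'+n))$ with $\mu_E(C_S\cap\sigma^{-n}C_{S'})$ via Theorem~\ref{thm:correspondence}, and use inclusion--exclusion to show that the sets $C_S$ span the cylinder algebra, so the standard criteria need only be checked on them. You give more detail than the paper, which leaves the $L^2$-approximation step implicit, and your observation that $C_S\cap\sigma^{-n}C_{S'}=C_{S\cup(S'+n)}$ holds for every $n$ makes the paper's remark about disjointness for large $n$ unnecessary.
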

\begin{proof}
Let $B =\{x: x(0) = 1\}$. By Theorem \ref{thm:correspondence}, for finite $T \subseteq \Z$, we have $\mu_E\left(\bigcap_{t\in T} \sigma^{-t}B \right) = c(T)$, and for $n$ large enough $S$ and $S' + n$ are disjoint, then 
$$\mu_E\left( \bigcap_{s\in S} \sigma^{- s} B \cap \sigma^{-n}\bigcap_{s' \in S'} \sigma^{-s'}B\right) = c(S\cup (S' + n)).$$
By inclusion-exclusion, it is enough to test ergodicity, weak mixing and mixing conditions on finite intersections of the sets \(\sigma^{-s}B\), since these sets generate the cylinder algebra of \(X_E\). Hence, the result holds.
\end{proof}

One may naturally wonder whether these properties are commonly satisfied.
The following random model gives a positive answer in a probabilistic sense.

\begin{proposition}\label{prop:generic-mixing}
Let $(Y_e)_{e\in\Z}$ be a stationary irreducible aperiodic Markov chain on
$\{0,1\}$ with transition matrix $P$ and stationary distribution $\pi$. Consider
the random set
\[
  E_\omega:=\{e\ge0:Y_e(\omega)=1\}.
\]
Then almost surely, for every finite $T\subseteq\Z$, the density
\[
  d_0\left(\bigcap_{t\in T}(E_\omega-t)\right)
\]
exists. Moreover, the inclusion $X_{E_\omega}\hookrightarrow\{0,1\}^{\Z}$ sends
$\mu_{E_\omega}$ to the stationary Markov measure $\mu_P$. In particular, the
associated measure-preserving system is measure-theoretically the stationary
Markov shift determined by $(\pi,P)$.
\end{proposition}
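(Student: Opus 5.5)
The plan is to apply the Birkhoff ergodic theorem to the stationary Markov shift and then transfer its conclusions to $x_{E_\omega}$ through Proposition~\ref{prop:generic-implies-correlations} and the genericity discussion that follows it. Let $\mu_P$ be the law of $(Y_e)_{e\in\Z}$ on $\{0,1\}^{\Z}$; it is a $\sigma$-invariant Borel probability measure. Since $P$ is irreducible and aperiodic on a finite state space, $P^n\to\one\pi$, and the standard argument shows that $(\{0,1\}^{\Z},\mu_P,\sigma)$ is mixing, hence ergodic. I will first check mixing on cylinder sets: for cylinders $C,C'$ supported on a window of length $L$ and $n>L$, $\mu_P(C\cap\sigma^{-n}C')$ equals $\mu_P(C)$ times a factor $P^{n-L}(\text{last state of }C,\text{first state of }C')/\pi(\cdot)$ times $\mu_P(C')$. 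This factor tends to $1$, which gives the mixing identity. Ergodicity then follows.

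Next I will handle genericity. There are countably many cylinder sets, and their indicators are continuous and uniformly dense among cylinder functions, which in turn are dense in $C(\{0,1\}^{\Z})$. By Birkhoff's theorem applied to each cylinder indicator and a countable intersection of full-measure events, $\mu_P$-almost every $y$ satisfies $\frac1N\sum_{m<N}f(\sigma^m y)\to\int f\,d\mu_P$ for all continuous $f$. Thus almost every realization $y=(Y_e(\omega))_e$ is generic for $\mu_P$. By the remark after Proposition~\ref{prop:generic-implies-correlations}, $x_{E_\omega}$ agrees with $y$ on all nonnegative coordinates, so it is generic for $\mu_P$ as well. Applying the computation in the proof of Proposition~\ref{prop:generic-implies-correlations}, now on the full shift, shows that for every finite $T$ the density $d_0(\bigcap_{t\in T}(E_\omega-t))$ exists and equals $\mu_P(\bigcap_{t\in T}\sigma^{-t}B)$.

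It remains to identify $\mu_{E_\omega}$. By Theorem~\ref{thm:correspondence}, $\mu_{E_\omega}$ is the weak-$*$ limit of the empirical measures $\mu_N$ along the orbit of $x_{E_\omega}$. Genericity says that these same empirical measures, viewed on $\{0,1\}^{\Z}$, converge to $\mu_P$. Hence the pushforward of $\mu_{E_\omega}$ under the inclusion $X_{E_\omega}\hookrightarrow\{0,1\}^{\Z}$ equals $\mu_P$. The inclusion is injective and intertwines the two shifts, and $\mu_P$ gives full measure to its image (it is supported on $X_{E_\omega}$, being a limit of measures supported on that closed set). Therefore the inclusion is a measure-theoretic isomorphism onto $(\{0,1\}^{\Z},\mu_P,\sigma)$.

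The main obstacle is justifying the single almost-sure event on which every continuous function is averaged correctly, since Birkhoff only gives one null set per function. I will resolve this by working with the countable family of cylinder indicators and approximating uniformly. A secondary point is that $X_{E_\omega}$ is an orbit closure in which $x_{E_\omega}$ has zeros at negative coordinates. This does not matter, because each cylinder average sees those coordinates only for finitely many $m$.
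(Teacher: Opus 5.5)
Your proposal is correct and is essentially the paper's argument: Birkhoff's theorem for the ergodic Markov shift is applied to countably many cylinder functions on a single full-measure event, and the zeros of $x_{E_\omega}$ at negative coordinates contribute only a boundary error that vanishes in the averages. The differences are cosmetic. You verify ergodicity via mixing and identify $\mu_{E_\omega}$ through genericity and uniqueness of weak-$*$ limits, whereas the paper simply cites ergodicity, matches $\mu_{E_\omega}\bigl(\bigcap_{t\in T}\sigma^{-t}B\bigr)=\mu_P\bigl(\bigcap_{t\in T}\sigma^{-t}[1]\bigr)$ via Theorem~\ref{thm:correspondence}, and concludes by inclusion--exclusion.
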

\begin{proof}
By inclusion-exclusion, it is enough to compare the two measures on cylinders
that require a prescribed finite set of coordinates to be equal to $1$. Fix a
nonempty finite set $T\subseteq\Z$ and put
\[
  M_T:=\max\{0,-\min T\}.
\]
For every $m\ge M_T$,
\[
  \prod_{t\in T}\one_{E_\omega}(m+t)
  =\prod_{t\in T}Y_{m+t}.
\]
Thus the corresponding averages differ by at most $M_T/N$. Since an
irreducible aperiodic stationary finite-state Markov chain is ergodic, the
ergodic theorem gives
\[
  \lim_{N\to\infty}\frac1N\sum_{m=0}^{N-1}
  \prod_{t\in T}\one_{E_\omega}(m+t)
  =\mathbb{E}\left[\prod_{t\in T}Y_t\right]
  =\mu_P\left(\bigcap_{t\in T}\sigma^{-t}[1]\right)
  \quad\text{almost surely}.
\]
(The assertion for $T=\varnothing$ is immediate.) The collection of finite
subsets of $\Z$ is countable, so we may take one full-measure event on which all
these limits hold simultaneously. On this event the hypothesis of
Theorem~\ref{thm:correspondence} is satisfied, and its formula shows that
$\mu_{E_\omega}$ and $\mu_P$ agree on every cylinder requiring specified
coordinates to be $1$. Inclusion--exclusion gives agreement on all cylinders.
Hence the inclusion of $X_{E_\omega}$ into the full shift sends
$\mu_{E_\omega}$ to $\mu_P$.
\end{proof}

For a finite-state irreducible aperiodic chain,
$P^n(i,j)\to\pi_j$ as $n\to\infty$. Joint probabilities of ordered coordinate
blocks can be written, by the Markov property, using transition matrices whose
powers are the gaps between successive blocks. Letting all those gaps tend to
infinity and applying the displayed convergence successively factors the joint
probability into the product of its marginals. Thus the stationary Markov shift
is mixing of all orders. Hence, almost surely, the system associated with
$E_\omega$ has this property.

\begin{corollary}\label{cor:mixing}
Let $E\subset\Nz$ satisfy the hypotheses of Theorem~\ref{thm:correspondence}, and
let $(X_E,\mu_E,\sigma)$ be the corresponding measure-preserving system. If the
system is mixing, then
\[
  \lim_{n\to\infty}\md\bigl(A_E\cap A_E/p^{\,n}\bigr)
  =\md(A_E)^2
  =d_0(E)^2
  =\lim_{n\to\infty}d_0\bigl(E\cap(E-n)\bigr).
\]
If the system is mixing of all orders, then for every $k\in\N$,
\[
  \md\bigl(A_E\cap A_E/p^{n_1}\cap\cdots\cap A_E/p^{n_k}\bigr)
  \longrightarrow \md(A_E)^{k+1},
\]
and
\[
  d_0\bigl(E\cap(E-n_1)\cap\cdots\cap(E-n_k)\bigr)
  \longrightarrow d_0(E)^{k+1},
\]
as $0<n_1<\cdots<n_k$ and
\[
  \min\{n_1,n_2-n_1,\ldots,n_k-n_{k-1}\}\longrightarrow\infty.
\]
\end{corollary}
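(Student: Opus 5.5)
The plan is to convert every multiplicative statement into a correlation of $\mu_E$ using Theorem~\ref{thm:correspondence}, and then read off each limit from the mixing hypothesis. For $a=p^{n}$ we have $\vp(p^n)=n$, so the second part of Theorem~\ref{thm:correspondence} gives
\[
  \md\bigl(A_E\cap A_E/p^{n}\bigr)=\mu_E\bigl(B\cap\sigma^{-n}B\bigr)=d_0\bigl(E\cap(E-n)\bigr).
\]
The second equality is the first part of the theorem with $T=\{0,n\}$. Likewise, $\md(A_E)=\mu_E(B)=d_0(E)$, either by Theorem~\ref{thm:one-prime}(i) or by taking $\ell=0$, i.e.\ $T=\{0\}$.

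Mixing means $\mu_E(B\cap\sigma^{-n}B)\to\mu_E(B)^2$. Combined with the identities above, this yields the whole chain of equalities in the first display. Equivalently, one can apply Proposition~\ref{prop:ergodic-characterization}(iii) with $S=S'=\{0\}$, which gives $c(\{0,n\})\to c(\{0\})^2$ directly.

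For the higher-order statement, take $T=\{0,n_1,\dots,n_k\}$ with $0<n_1<\dots<n_k$, so the elements of $T$ are distinct. Theorem~\ref{thm:correspondence} then gives
\[
  \md\bigl(A_E\cap A_E/p^{n_1}\cap\cdots\cap A_E/p^{n_k}\bigr)
  =\mu_E\bigl(B\cap\sigma^{-n_1}B\cap\cdots\cap\sigma^{-n_k}B\bigr)
  =d_0\bigl(E\cap(E-n_1)\cap\cdots\cap(E-n_k)\bigr).
\]
Mixing of all orders, in its standard formulation, says that for measurable sets $B_0,\dots,B_k$,
\[
  \mu_E\bigl(B_0\cap\sigma^{-n_1}B_1\cap\cdots\cap\sigma^{-n_k}B_k\bigr)\to\prod_j\mu_E(B_j)
\]
as the minimal gap $\min\{n_1,n_2-n_1,\dots,n_k-n_{k-1}\}$ tends to infinity. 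Applying this with all $B_j=B$ gives the limit $\mu_E(B)^{k+1}=d_0(E)^{k+1}=\md(A_E)^{k+1}$, which proves both displayed convergences.

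There is essentially no obstacle here. The one point to check is that the gap condition used in the statement matches the definition of mixing of all orders. In the standard definition the gap requirement is exactly the one in the statement, and $n_1\to\infty$ is included since $n_1$ is the first gap from $0$. No uniformity issues arise, because every quantity involved is an exact identity for each fixed tuple $(n_1,\dots,n_k)$, not merely a limit.
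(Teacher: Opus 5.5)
Your proposal is correct and follows essentially the same route as the paper: you use Theorem~\ref{thm:correspondence} with $a=p^{n}$ (and $T=\{0,n_1,\dots,n_k\}$) to identify each multiplicative correlation with $\mu_E(B\cap\sigma^{-n_1}B\cap\cdots\cap\sigma^{-n_k}B)$ and with the corresponding $d_0$-density, and then you apply mixing, respectively mixing of all orders, to the sets $B,\sigma^{-n_1}B,\dots,\sigma^{-n_k}B$. Your alternative via Proposition~\ref{prop:ergodic-characterization}(iii) with $S=S'=\{0\}$ is the same argument phrased through $c(\cdot)$, and your check that the gap condition matches the standard definition of mixing of all orders is accurate.
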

\begin{proof}
By Theorem~\ref{thm:correspondence}, with $a=p^n$,
\[
  \md(A_E\cap A_E/p^n)=\mu_E(B\cap\sigma^{-n}B),
  \qquad \md(A_E)=\mu_E(B).
\]
Mixing gives
$\mu_E(B\cap\sigma^{-n}B)\to\mu_E(B)^2$, and the correspondence identity also
identifies the same quantity with $d_0(E\cap(E-n))$. This proves the first
assertion. For the higher-order statement, apply the corresponding
higher-order mixing limit to
\[
  B,\ \sigma^{-n_1}B,\ \ldots,\ \sigma^{-n_k}B
\]
and use Theorem~\ref{thm:correspondence} once more.
\end{proof}

\subsection{Finite-prime case}

We now combine the one-prime symbolic systems into a finite-prime dynamical
model.

\begin{theorem}\label{thm:correspondence-finite}
Let $p_1,\dots,p_k$ be distinct primes, and for each $i=1,\dots,k$ let
$E_i\subseteq\Nz$. We take multiplicative density with respect to a prime-box
F{\o}lner sequence associated to an enumeration of the primes whose first $k$
elements are $p_1,\dots,p_k$. Assume that for each $i$ and every finite set
$T\subseteq\Z$, the density of $\bigcap_{t\in T}(E_i-t)$ exists. Let
$(X_i,\mu_i,\sigma_i,B_i)$ be the one-prime symbolic system associated with $E_i$
by Theorem~\ref{thm:correspondence}. Set
\[
  X:=X_1\times\cdots\times X_k,\qquad
  \mu:=\mu_1\otimes\cdots\otimes\mu_k,\qquad
  B:=B_1\times\cdots\times B_k.
\]
For each $a\in\N$, define
\[
  T_a:=\sigma_1^{\,\vp[p_1](a)}\times\cdots\times\sigma_k^{\,\vp[p_k](a)}.
\]
Then $(T_a)_{a\in\N}$ is a measure-preserving semigroup action of $(\N,\cdot)$ on
$(X,\mu)$ and, if
\[
  A:=\{n\in\N:\vp[p_i](n)\in E_i\text{ for }i=1,\dots,k\},
\]
then for every $\ell\in\N$ and every $a_1,\dots,a_\ell\in\N$,
\[
  \md\bigl(A\cap A/a_1\cap\cdots\cap A/a_\ell\bigr)
  =\mu\bigl(B\cap T_{a_1}^{-1}B\cap\cdots\cap T_{a_\ell}^{-1}B\bigr).
\]
\end{theorem}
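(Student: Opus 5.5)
The plan is to reduce everything to the finite-prime formula of Proposition~\ref{prop:correlations} and to the one-prime correspondence of Theorem~\ref{thm:correspondence}, with the product structure doing the rest.

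\emph{Step 1: the action.} Since $\vp[p_i](ab)=\vp[p_i](a)+\vp[p_i](b)$ and the $\sigma_i$ on different factors commute, we get
\[
  T_{ab}=\prod_i\sigma_i^{\vp[p_i](a)+\vp[p_i](b)}=T_aT_b,
\]
and $T_1=\mathrm{id}$. So $a\mapsto T_a$ is a semigroup homomorphism from $(\N,\cdot)$. Each $\sigma_i$ is a homeomorphism preserving $\mu_i$ by Theorem~\ref{thm:correspondence}. Hence $T_a$ preserves $\mu$: on measurable rectangles
\[
  \mu\bigl(T_a^{-1}(C_1\times\cdots\times C_k)\bigr)=\prod_i\mu_i\bigl(\sigma_i^{-\vp[p_i](a)}C_i\bigr)=\prod_i\mu_i(C_i),
\]
and rectangles form a $\pi$-system generating the product $\sigma$-algebra. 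Uniqueness of extension then gives $T_a$-invariance of $\mu$.

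\emph{Step 2: computing the measure side.} Put $t_{j,i}:=\vp[p_i](a_j)$ and $t_{0,i}=0$. Then $T_{a_j}^{-1}B=\prod_i\sigma_i^{-t_{j,i}}B_i$. Intersections of rectangles are rectangles coordinatewise, so
\[
  B\cap\bigcap_{j=1}^{\ell}T_{a_j}^{-1}B=\prod_{i=1}^{k}\Bigl(\bigcap_{j=0}^{\ell}\sigma_i^{-t_{j,i}}B_i\Bigr),
\]
and its $\mu$-measure is the product of the $\mu_i$-measures. By Theorem~\ref{thm:correspondence} applied with $T_i=\{t_{0,i},\dots,t_{\ell,i}\}$ (a set, so repeated shifts collapse harmlessly), each factor equals
\[
  d_0\Bigl(\bigcap_{j=0}^{\ell}(E_i-t_{j,i})\Bigr).
\]

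\emph{Step 3: the arithmetic side.} Each of these densities exists by the standing hypothesis. Proposition~\ref{prop:correlations}, used with the prime-box sequence whose first $k$ primes are $p_1,\dots,p_k$, therefore gives
\[
  \md\bigl(A\cap A/a_1\cap\cdots\cap A/a_\ell\bigr)=\prod_{i=1}^{k}d_0\Bigl(\bigcap_{j=0}^{\ell}(E_i-t_{j,i})\Bigr).
\]
Comparing this with Step 2 finishes the proof.

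There is no real obstacle here. The only points needing care are the measure-preservation of the product map, which I would handle via the $\pi$-system argument above, and checking that the multiset-versus-set issue in $T_i$ does not change the intersection, which it does not.
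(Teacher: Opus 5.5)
Your proposal is correct and follows essentially the same route as the paper. Both arguments get the semigroup property from additivity of $v_{p_i}$, factor $\mu$ over the coordinatewise rectangle $B\cap\bigcap_j T_{a_j}^{-1}B$, apply Theorem~\ref{thm:correspondence} to each factor, and use Proposition~\ref{prop:correlations} on the arithmetic side. Your $\pi$-system argument for invariance of the product measure and your remark about repeated shifts collapsing make explicit two points the paper states without detail.
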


\begin{proof}
Each $T_a$ is measure-preserving because it is a product of powers of
measure-preserving transformations.
Also, for $a,b\in\N$,
\[
  T_{ab}=\sigma_1^{\,\vp[p_1](ab)}\times\cdots\times\sigma_k^{\,\vp[p_k](ab)}
  =\sigma_1^{\,\vp[p_1](a)+\vp[p_1](b)}\times\cdots\times\sigma_k^{\,\vp[p_k](a)+\vp[p_k](b)}
  =T_a\circ T_b,
\]
so $(T_a)_{a\in\N}$ defines a semigroup action of $(\N,\cdot)$.

Now fix $a_1,\dots,a_\ell\in\N$. For each $1\le i\le k$, set
$t_{j,i}:=\vp[p_i](a_j)$ for $1\le j\le\ell$ and $t_{0,i}:=0$. By
Proposition~\ref{prop:correlations},
\[
  \md\bigl(A\cap A/a_1\cap\cdots\cap A/a_\ell\bigr)
  =\prod_{i=1}^{k}d_0\Bigl(\bigcap_{j=0}^{\ell}(E_i-t_{j,i})\Bigr).
\]
On the other hand, by Theorem~\ref{thm:correspondence},
\[
  \mu_i\bigl(B_i\cap\sigma_i^{-t_{1,i}}B_i\cap\cdots\cap\sigma_i^{-t_{\ell,i}}B_i\bigr)
  =d_0\Bigl(\bigcap_{j=0}^{\ell}(E_i-t_{j,i})\Bigr).
\]
Since $B=B_1\times\cdots\times B_k$ and $\mu=\mu_1\otimes\cdots\otimes\mu_k$, the
left-hand side factors over the coordinates. Comparing this with the
multiplicative density formula above proves the claim.
\end{proof}


\section*{Acknowledgements}
 A.~Y. was supported in part by the Natural Sciences and
Engineering Research Council of Canada, NSERC (GR030571 and GR030540).


\end{document}